\documentclass[a4paper,sts,reqno]{my_imsart}

\usepackage{amsthm,amsmath}
\RequirePackage[colorlinks,citecolor=blue,
urlcolor=blue,breaklinks=true]{hyperref}

\usepackage{amssymb,a4wide}
\usepackage{amsfonts,mathrsfs}
\usepackage{graphicx}
\usepackage{url}
\usepackage{pifont,dsfont}
\usepackage{breakcites,microtype}
\usepackage[round,authoryear]{natbib}

\bibliographystyle{plainnat}

\usepackage{cleveref}
\usepackage{autonum}
\usepackage{nicefrac}
\usepackage{bbm, bm}
\usepackage{mathrsfs}

\startlocaldefs
\def\bX{\boldsymbol X}
\def\bY{\boldsymbol Y}
\def\bZ{\boldsymbol Z}
\def\bxi{\boldsymbol\xi}
\def\balpha{{\boldsymbol\alpha}}

\def\bx{\boldsymbol x}

\def\bz{\boldsymbol z}
\def\bw{\boldsymbol w}
\def\bu{\boldsymbol u}

\def\hat{\widehat}
\newtheorem{assLip}{Assumption}

\newtheorem{assBound}{Assumption}

\newtheorem{assGaussian}{Assumption}

\newtheorem{theorem}{Theorem}
\newtheorem{proposition}[theorem]{Proposition}
\newtheorem{corollary}[theorem]{Corollary}
\newtheorem{lemma}[theorem]{Lemma}

\theoremstyle{remark}
\newtheorem{remark}[theorem]{Remark}

\endlocaldefs

\parskip=5pt

\begin{document}

\begin{frontmatter}


\title{Risk bounds for aggregated 
shallow neural networks using Gaussian prior}
\runtitle{Risk bounds for aggregated neural nets}

\begin{aug}
\author{\fnms{Laura} \snm{Tinsi\ead[label=e1]{Laura.Tinsi@ensae.fr}}}
\and
\author{\fnms{Arnak S.} \snm{Dalalyan}\ead[label=e2]{arnak.dalalyan@ensae.fr}}

\runauthor{Tinsi and Dalalyan}

\affiliation{CREST, ENSAE Paris, Institut Polytechnique de Paris}
\address{5 avenue H. le Chatelier, Palaiseau, France}
\end{aug}

\begin{abstract}
Analysing statistical properties of neural networks 
is a central topic in statistics and machine learning. 
However, most results in the literature focus on the
properties of the neural network minimizing the training 
error. The goal of this paper is to consider aggregated
neural networks using a Gaussian prior. The departure
point of our approach is an arbitrary aggregate satisfying
the PAC-Bayesian inequality. The main contribution 
is a precise nonasymptotic assessment of the estimation
error appearing in the PAC-Bayes bound. We also review 
available bounds on the error of approximating a function
by a neural network. Combining bounds on estimation and 
approximation errors, we establish risk bounds which are 
sharp enough to lead to minimax rates of estimation over
Sobolev smoothness classes.
\end{abstract}

\begin{keyword}[class=MSC]
\kwd[Primary ]{62H30}
\kwd[; secondary ]{62G08}
\end{keyword}

\begin{keyword}
\kwd{shallow neural networks}
\kwd{PAC Bayes}
\kwd{mirror averaging}
\kwd{exponentially weighted aggregate}
\end{keyword}

\end{frontmatter}

\maketitle

\section{Introduction}

Neural networks are the most widely used parameterised
functions for solving machine learning tasks. The parameters
of the neural network are then learned from data.
Assessing the error of the learned network
on new, unobserved examples is a central topic in 
statistics and learning theory \citep{bartlett_montanari_rakhlin_2021,Fan2021}. The most popular approach for estimating the parameters of
the network from data, referred to as weights and biases, 
 is the minimization of the (regularized) 
training error. 
This is usually done by the stochastic gradient 
descent, or a version of it. Risk bounds for these
networks are based on Vapnik-Chervonenkis dimension 
\citep{BartlettMM98,anthony_bartlett_1999,BartlettHarvey}. Even for simple networks containing only one 
hidden layer, these risk bounds are rather involved 
\citep{Xie17,ZhongS0BD17,CaoG19,Ba2020}. 

A well-known alternative to minimizing the regularized
training error is to use a prediction rule based on a 
posterior distribution. Typical example is the 
network obtained by sampling its weights from the
posterior, or the convex combination of the networks 
averaged using the posterior distribution. 
Surprisingly, little is known about risk bounds 
of posterior-based prediction rules in the context
of neural networks. The goal of the present work is
to do the first step in filling this gap by focusing
on one-hidden-layer feedforward neural networks and 
Gibbs posteriors using Gaussian prior. An attractive
feature of posterior-based methods is that their
analysis can be carried out using the PAC-Bayes theory
\citep{mcallester1999some, mcallester2003pac} as a 
substitute to the Vapnik-Chervonenkis dimension or
the Rademacher complexity. We refer the reader to \citep{catoni2007pac,guedj2019primer,alquier2021userfriendly} for a comprehensive account of the PAC-Bayesian 
approach in statistics and learning. 

PAC-Bayes theory has been already used in the framework 
of neural networks, mainly for providing data-driven 
bounds on the generalisation error of trained (stochastic)
networks and prior selection based on these bounds 
\citep{Rivasplata2018PAC,Lever2013tighter,dziugaite2017computing,neyshabur2017exploring,ZhouVAAO19,LetarteGGL19,BiggsG21,perezortiz2021learning,perezortiz2021progress}. In this paper, we take a different route and 
propose to use in-expectation PAC-Bayes bounds for 
investigating the risk (or, the expected excess loss) of
aggregated neural networks. To be more specific, let
$\mathcal{F}_{\mathsf W}:= \{f_{\bw}, {\bw} \in {\mathsf W}
\}$ be a parametric class of prediction rules, with a parameter $\bw$ lying in a measurable space $({\mathsf W}, \mathscr{W})$. One can think of $\mathcal{F}_{\mathsf W}$ as
a set of neural networks with a given architecture and of
$\bw$ as the vector of the weights and biases. Assume we are
given a sample of size $n$ independently drawn from an 
unknown distribution $\bf P$, and we wish to ``aggregate''
elements of $\mathcal{F}_{\mathsf W}$ to obtain a 
prediction rule $\hat f_n$ that mimics the Bayes predictor
$f_{\mathbf P}$. This means that for a prescribed loss 
function $\ell(\cdot,\cdot)$ taking real values, we 
wish $\ell(\hat f_n, f_{\mathbf P})$ to be small. It turns out
that under some general assumptions, for a given prior 
distribution $\pi$ on $\mathsf W$ and a temperature 
parameter $\beta>0$, there exists an aggregate  
$\hat f_n$ such that  
\begin{align}\label{eq:intro}
    \mathbf E[\ell(\hat f_n,f_{\mathbf P})] \le  C_{\textsf{PB}}
    \inf_{p} \bigg\{ \int_{\mathsf W} 
    \ell(f_{\bw},f_{\mathbf P})\,p(d\bw) + \frac{\beta}{n} \,D_{\textsf{KL}}(p||\pi)\bigg\},
\end{align}
where $C_{\textsf{PB}}$ is some constant and 
the infimum is over all probability distributions $p$ 
over $\mathsf W$. We say then that $\hat f_n$ satisfies 
a PAC-Bayes inequality in-expectation. For regression
with fixed design, the Gibbs-posterior mean was shown
to satisfy \eqref{eq:intro} with $ C_{\textsf{PB}} = 1$ 
in \citep{Leung06} for Gaussian noise, and in \citep{dalalyan2007aggregation} for more general noise
distributions. In some other problems, including the random design regression and the density estimation, similar bounds
were established for the mirror averaging \citep{yuditskii2005recursive,learning2008Juditsky}. 
PAC-Bayes bounds with $C_{\textsf{PB}}>1$ for the 
prediction rule obtained by randomly drawing $\bw$ 
from the Gibbs posterior were proved in \citep{catoni2007pac,AlquierB13}. The recent papers
\citep{BiggsG21,Dubois} studied the problem of aggregation 
of neural networks with sign activation.

In the present work, we elaborate on \eqref{eq:intro} 
to get a tractable risk bound when $\mathcal{F}_{\mathsf W}$ is the set of neural networks with a single hidden layer.  
The tractability here should be understood as the property
of showing clearly the dependence on the important problem
characteristics (sample-size, input and output dimensions) 
and those of the learning algorithm (variances of the prior distribution, number of hidden layers, properties of the 
activation functions). Our first main contribution stated 
in \Cref{prop:rho} is a tractable risk bound formulated as 
an oracle inequality. To our knowledge, this inequality is
sharper and easier to deal with than its counterparts for 
the training error minimizing shallow networks. To show
potential implications of this oracle inequality, we combine
it with known approximation bounds when the Bayes predictor 
lies in a Sobolev ball. Interestingly, we show that a proper
choice of the width of the hidden layer and the variances of
the prior leads to minimax optimal rates of convergence, up
to logarithmic factors. More specifically, for the Sobolev 
ball $W^r_2([0,1]^{D_0})$ of smoothness $r$ and input dimension 
$D_0$, we obtain in \Cref{prop:riskbound} the rate 
$n^{-{2 r}/{(2r+ D_0)}}\log^2 n$ for a specific class of 
sigmoid activation functions. A similar result is obtained for
the ReLU activation as well, but with a slightly slower 
rate $n^{-{2\bar r}/{(2\bar r+ D_0 +1)}}$ (up to a 
polylogarithmic factor) for any $\bar r<r$.

The rest of the paper is organized as follows. In
\Cref{section:prelim}, we define the generic PAC-Bayesian
framework and instantiate it in the setting of shallow 
neural networks. In \Cref{section:oracleBound}, we state 
the main oracle bound for shallow neural networks with a
Gaussian prior.  \Cref{section:examples} provides 
examples of statistical problems where PAC-Bayesian 
bounds of type \eqref{eq:intro}  are available. 
\Cref{section:approxBound} is devoted to a selective 
review of the literature on approximation 
properties of neural networks with bounded (sigmoid) 
and unbounded (ReLU) activation functions. 
Finally,  \Cref{section:riskbound} contains the
upper bounds on the worst-case risk which are nearly 
minimax rate-optimal in the case of sigmoid activation. Some concluding remarks are 
provided in \Cref{section:conclusion}. Technical 
proofs are deferred to the appendices.

\section{Preliminaries and notation}\label{section:prelim}
In this section, we set the general framework of the 
PAC-Bayesian bound that will be the starting point of 
our work. We then instantiate it in the specific case 
of neural networks.

\subsection{General framework and PAC-Bayesian type bounds}\label{section:generalframeworks}

Let $(\mathcal{Z}, \mathscr A)$ be a measurable space. 
We observe one realisation of the random vector
$\mathbf{Z}^n = (Z_1, \dots, Z_n)  \in \mathcal{Z}^n$  
drawn from an unknown distribution $\mathbf{P}$ on 
$(\mathcal{Z}^n, \mathscr A^{\otimes n})$. We denote
by $\|\bx\|_2$ the Euclidean norm of the vector $\bx$
of an Euclidean space. 
Let $\mathcal{X} \subset \mathbb{R}^{D_0}$, 
$D_0 \ge 1$, be a Borel set and  let $\mu$ be a 
$\sigma$-finite measure on $\big(\mathcal{X}, 
\mathscr B(\mathcal{X})\big)$ such that $M_2^2 
= D_0^{-1} \int_{\mathcal X} \|\bx\|_2^2\,\mu(d\bx)
<+\infty$. In the sequel, we denote by $\mathbb 
L_q(\mu)$, $q\in[1,\infty)$, the set of all the 
functions $f:\mathcal X\to \mathbb{R}^{D_2}$ such 
that $\int_{\mathcal X} \|f(\bx)\|_2^q\,\mu
(d\bx)<\infty$. Let $\mathcal{P}_{\mathsf W}$ be 
the space of all probability measures on 
${\mathsf W}$ and let 
\begin{align}
    \mathcal{P}_1(\mathcal{F}_{\mathsf W}) = \Big\{
    p\in \mathcal{P}_{\mathsf W} : \int_{\mathsf W}
    \|f_{\bw}(\bx)\|_2\,p(d{\bw})< \infty,\quad
    \text{for all}\quad x\in\mathcal X\Big\}.
\end{align}

We consider the problem of estimating a function 
$f_{\mathbf P}\in\mathbb L_2(\mu)$. At this stage, one
may think of $f_{\mathbf P}$ as the multidimensional
regression function when $\mathcal Z = \mathcal X 
\times \mathbb R^{D_2}$, the Bayes classifier
when  $\mathcal Z = \mathcal X\times \{-1,1\}$ 
or the density of observations when $\mathcal Z = 
\mathcal X$ (in the last two cases $D_2=1$). 
A common approach in statistics and statistical
learning is to use a parametric set  
$\mathcal{F}_{\mathsf W}:= \{f_{\bw}, {\bw} \in 
{\mathsf W}\} \subset \mathbb L_2(\mu)$, indexed 
by a measurable set $\mathsf W\subset \mathbb R^d$, 
for some $d\in\mathbb N$, for constructing an 
estimator of $f_{\mathbf P}$. Instances of this 
approach are the empirical risk minimizer, 
the Bayesian posterior mean, the exponentially 
weighted aggregate, etc. 
The quality of an estimator $\hat f_n$ of 
$f_{\mathbf P}$ is measured by means of a loss 
function $\ell: \mathbb L_2(\mu)\times \mathbb L_2
(\mu)\mapsto \mathbb R_+$; an estimator $\hat 
f_n$ is good if its risk 
\begin{align}
    \mathbf E_{\mathbf P}\big[\ell\big(\hat f_n 
    (\mathbf{Z}^n), f_{\mathbf P}\big)\big] 
    = \int_{\mathcal{Z}^n} \ell\big(\hat f_n 
    (\mathbf{z}), f_{\mathbf P}\big)\, \mathcal{P} 
    (d\mathbf{z})    
\end{align}
is small. A widespread choice of the loss function,
used throughout this paper except in 
\Cref{section:examples},  is the squared $\ell_2$-norm
$\ell(g,h) = \|g-h\|^2_{\mathbb L_2(\mu)} =
\int_\mathcal{X}\|g(\bx)-h(\bx)\|_2^2\, \mu(d\bx)$,
$\forall g, h \in \mathbb L_2(\mu)$.

We say that the estimator $\hat f_n$ satisfies the
PAC-Bayesian bound with prior $\pi\in \mathcal{P}_1 
(\mathcal{F}_{\mathsf W})$ and temperature 
parameter $\beta>0$, if \eqref{eq:intro} is satisfied
(where the infimum in the right hand side is over all 
$p$ in $\mathcal P_1(\mathcal F_{\mathsf W})$). 
If $C_{\textsf{PB}}=1$, the bound is called exact or sharp.
When the loss function is the squared $\mathbb L_2$-norm,
the PAC-Bayesian bound reads as
\begin{align}\label{Inequality2}
    \mathbf E_{\mathbf P}[\|\hat f_n-f_{\mathbf P} 
    \|_{\mathbb L_2}^2] \leq
    C_{\textsf{PB}} \inf_{p \in \mathcal{P}_1(
    \mathcal{F}_{\mathsf W})}  \bigg\{\int_{\mathsf W} 
    \|f_{\bw}-f_{\mathbf P}\|_{\mathbb L_2(\mu)}^2 
    \,p(d\bw) + \frac{\beta}{n} \,D_{\textsf{KL}}(p||\pi)
    \bigg\}.
\end{align}

\subsection{Shallow neural networks}

In the rest of this section, we provide more details on 
the notations and assumptions that will stand when we  
estimate $f_{\mathbf P}$ by aggregation of neural 
networks. We consider the class of networks with a 
single hidden layer and denote by $D_1$ the number 
of units in this layer.

In order to merge weights and biases of a neural network, 
we note $\bx = (1, x_1, \dots, x_{D_0-1})^\top 
\in \mathcal{X}$. The set ${\mathsf W}$ of the weights 
of a neural network can be divided into the weights of 
the hidden layer, $\bw_1$, and the weights of the output 
layer, $\bw_2$ so that  $\bw_1 \in \mathbb{R}^{D_0\times
D_1}$ and   $\bw_2 \in \mathbb{R}^{D_1\times D_2}$. 
Therefore, $\bw = (\bw_1, \bw_2)^\top$  can be seen as 
an element of $\mathbb{R}^{d}$  with the overall dimension 
$d = D_0D_1 + D_1D_2$. The neural network parametrized by $\bw$ has the form:
\begin{align}\label{NNrep}
    f_{\bw}(\bx) = \bw_2^\top\bar\sigma(\bw^\top_1 \bx) \in \mathbb R^{D_2},  \  \forall \bx\in \mathbb R^{D_0}
    \quad
    \text{with}\quad 
    \bar\sigma: \bx\in \mathbb{R}^{D_1} \mapsto 
    \begin{bmatrix}
    \sigma(x_1) \\
    \vdots \\   
    \sigma(x_{D_1})\end{bmatrix} 
    \in \mathbb{R}^{D_1},
\end{align}
where $\sigma: \mathbb R\to \mathbb R$ is a scalar
activation function. In the next sections, we will 
consider both the case of bounded and unbounded 
activation functions in order to cover most of the 
usual ones.  We refer to the bounded  case by means 
of the following assumption.
\begin{assBound}\label{ass:bounded}
The function $\sigma$ is bounded by $M_\sigma$, 
\textit{i.e}, $|\sigma(u)|\le M_\sigma$ for all 
$u\in\mathbb R$.
\end{assBound}

Let us stress that only some of our results require
\Cref{ass:bounded}. However, all our
results will require the Lipschitz assumption 
stated below, which is satisfied by sigmoid functions 
as well as piecewise continuous functions (including ReLU). 
Without loss of generality, we will assume that the
Lipschitz constant is equal to one.

\begin{assLip}\label{ass:lip} 
For every pair of real numbers $(u,u')$, we have
$|\sigma(u)-\sigma(u')|\le  |u-u'|$.
\end{assLip}

\subsection{Spherical Gaussian prior distribution}
The prior distribution $\pi$ defined in the PAC-Bayesian 
framework can be interpreted as the initial distribution 
of the weights, or as a regulariser. We focus in this 
paper on the most natural choice of prior, the Gaussian 
distribution. Recall that the weights of a neural network 
are split into two groups: the weights $\bw_1$ of the 
hidden layer
and the weights $\bw_2$ of the output layer.  To take into account their different roles we assume the distribution over $\bw$ is a product of two spherical Gaussians with different variances. 

\begin{assGaussian}\label{ass:gaussian}\text{}
The prior $\pi$ satisfies
 $\pi = \pi_1\otimes\pi_2 = 
 \mathcal{N}(0,\rho^2_1\mathbf I_{D_0D_1})\otimes \mathcal{N}(0,
 \rho^2_2 \mathbf I_{D_1D_2})$.
\end{assGaussian}

We refer to $\pi_1$ and $\pi_2$ as the distribution 
of the hidden layer and the output layer respectively.

\section{Oracle inequalities for networks with one hidden layer and Gaussian prior}\label{section:oracleBound}

In this section, we first derive a bound for the risk of the estimator $\hat f_n$ when the prior has an arbitrary centered Gaussian distribution, and subsequently provide an oracle inequality for a carefully chosen Gaussian prior. Let $\bar\bw\in\mathsf W$ be any value of the parameter. 
Using the triangle inequality, in conjunction with the fact 
that $\sqrt{(a+b)^2+c^2}\le a+\sqrt{b^2+c^2}$, one can
infer from \eqref{Inequality2} that
\begin{align}\label{eq:ineApproxEstim}
    \Big(C_{\mathsf{PB}}^{-1}\,\mathbf E_{\mathbf P}
    \big[\|\hat f_n-f_{\mathbf P}\|_{\mathbb L_2(\mu)}^2
    \big]\Big)^{1/2} 
    \le \|f_{\bar\bw}-f_{\mathbf P}\|_{\mathbb L_2(\mu)} 
    + \textup{Rem}_n(\bar\bw)^{1/2},
\end{align}
with the remainder term given by
\begin{align}\label{eq:rem}
    \textup{Rem}_n(\bar\bw) \triangleq \inf_{
    p \in \mathcal{P}_1(\mathcal{F}_{\mathsf W})}
    \bigg\{\int_{\mathsf W} 
    \|f_{\bw}-f_{\bar\bw}\|_{\mathbb L_2(\mu)}^2
    \,p(d\bw) + \frac{\beta}{n} \,D_{\textsf{KL}}(p||\pi)
    \bigg\}.
\end{align}
Considering $f_{\bar\bw}$ as an approximator of 
$f_{\mathbf P}$, the right hand side of 
\eqref{eq:ineApproxEstim} can be seen as the sum of 
the approximation error $\|f_{\bar\bw}-f_{\mathbf P} 
\|_{\mathbb L_2(\mu)} $ and the estimation error
$\textup{Rem}_n(\bar\bw)$. The main goal of this paper 
is to analyze this estimation error and then to combine 
it with available bounds on the approximation error. 
Our approach will consist in replacing the infinimum 
over all measures $p$ by the infinimum over Gaussian 
distributions, for which mathematical 
derivations are considerably simpler.

It is well-known (see, for example, 
\citep{mcallester2003pac,AlquierP2009Pbfr,guedj2019primer}) 
that for a fixed $\bar\bw$, the infinimum in 
\eqref{eq:rem} is attained by the Gibbs distribution
\begin{align}
    p^*(d\bw) \propto \exp\Big\{-\frac{n}{\beta} 
    \|f_{\bw}-f_{ \bar\bw}\|_{\mathbb L_2(\mu)}^2\Big\} 
    \pi(d\bw).
\end{align}
Furthermore in this case,
\begin{align}
    \textup{Rem}_n(\bar\bw)   = - \frac{\beta}{n} \,\log 
    \int_{\mathsf W} \exp\Big\{-\frac{n}{\beta} \|f_{\bw} 
    - f_{\bar\bw}\|_{\mathbb L_2(\mu)}^2\Big\}\pi(d\bw).
\end{align}
This expression is often referred to as the free energy. 
The content of the rest of this section can be seen as
leveraging the variational formulation \eqref{eq:rem} for 
obtaining user-friendly upper bounds. 



\begin{proposition}\label{prop:tau}
Let  \Cref{ass:lip} 
and \Cref{ass:gaussian} be satisfied.  Recall that 
$d = D_0D_1 + D_1D_2$ is the number of weights of the 
neural network and $n$ is the sample size.
\begin{itemize}\itemsep=7pt
    \item[\textup{i)}] If \Cref{ass:bounded} holds true, then 
    \begin{align}\label{eq:ineqTau}
        \textup{Rem}_n(\bar\bw) \le  \frac{\beta}{2n}
        \bigg\{\frac{\|\bar\bw_1\|_{\mathsf F}^2}{\rho_1^2 } + 
        \frac{\|\bar\bw_2\|_{\mathsf F}^2}{\rho_2^2 }+ d\log 
        \left(1+ \frac{ 2n (A_1\rho_1^2 + A_2 \rho_2^2)}{d\beta}
        \right)\bigg\}
    \end{align}
    where $A_1 = D_0D_1M_2^2\|\bar\bw_2
    \|^2_{\mathsf F}$ and  $A_2 = D_1D_2\mu(\mathcal X) 
    M_\sigma^2$.
    \item[\textup{ii)}] If the activation function is unbounded but vanishes
    at the origin, then
    \begin{align}\label{eq:ineqTau2}
        \textup{Rem}_n(\bar\bw) \le  \frac{\beta}{2n}
        \bigg\{\frac{\|\bar\bw_1\|_{\mathsf F}^2}{\rho_1^2 } + 
        \frac{\|\bar\bw_2\|_{\mathsf F}^2}{\rho_2^2 }+ 
        2d\log \left(1+ \frac{n (A_1\rho_1^2 + A_2'\rho_2^2 + A_3'
        \rho_1^2\rho_2^2)}{d\beta}\right)
        \bigg\}
    \end{align}
    where $A'_2 = \bar M_2^2\|\bar\bw_1\|_{\mathsf F}^2 
    D_2$ and $A'_3 = M_2^2D_0D_2$. 
\end{itemize}
\end{proposition}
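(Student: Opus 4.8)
The plan is to upper bound $\textup{Rem}_n(\bar\bw)$ by evaluating the functional in \eqref{eq:rem} at a conveniently chosen \emph{Gaussian} candidate, thereby replacing the infimum over all of $\mathcal P_1(\mathcal F_{\mathsf W})$ by a finite-dimensional minimization. Concretely, I would take $p$ to be the product Gaussian centered at the oracle $\bar\bw$ whose covariance is a common scalar multiple $c>0$ of the prior covariance, i.e. $p=\mathcal N(\bar\bw_1,c\rho_1^2\mathbf I_{D_0D_1})\otimes\mathcal N(\bar\bw_2,c\rho_2^2\mathbf I_{D_1D_2})$. Such a $p$ lies in $\mathcal P_1(\mathcal F_{\mathsf W})$ (Gaussian moments are finite and $f_{\bw}$ has at most polynomial growth in $\bw$), and it has two decisive advantages: the divergence $D_{\textsf{KL}}(p\|\pi)$ is available in closed form, and the integral term decouples across the two layers. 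This reduces the entire statement to a scalar optimization over $c$.

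For the integral term I would use the pointwise splitting
\[
f_{\bw}(\bx)-f_{\bar\bw}(\bx)=(\bw_2-\bar\bw_2)^\top\bar\sigma(\bw_1^\top\bx)+\bar\bw_2^\top\big(\bar\sigma(\bw_1^\top\bx)-\bar\sigma(\bar\bw_1^\top\bx)\big).
\]
Under the product structure $\bw_1$ and $\bw_2$ are independent and $\bw_2-\bar\bw_2$ is centered, so the cross term vanishes in expectation and only the two squared norms survive. The first is handled by $\mathbf E\|(\bw_2-\bar\bw_2)^\top v\|_2^2=c\rho_2^2 D_2\|v\|_2^2$ with $v=\bar\sigma(\bw_1^\top\bx)$; under \Cref{ass:bounded} one bounds $\|v\|_2^2\le D_1M_\sigma^2$, which after integrating against $\mu$ produces the term $c\rho_2^2 A_2$. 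The second is controlled by \Cref{ass:lip}, giving $\|\bar\sigma(\bw_1^\top\bx)-\bar\sigma(\bar\bw_1^\top\bx)\|_2\le\|(\bw_1-\bar\bw_1)^\top\bx\|_2$ and hence, after taking expectation and integrating, the term $c\rho_1^2 A_1$. In parallel, the explicit two-Gaussian formula yields $\tfrac{\beta}{n}D_{\textsf{KL}}(p\|\pi)=\tfrac{\beta}{2n}\big(cd-d-d\log c+\|\bar\bw_1\|_{\mathsf F}^2/\rho_1^2+\|\bar\bw_2\|_{\mathsf F}^2/\rho_2^2\big)$, where the Mahalanobis part reproduces exactly the first two summands of \eqref{eq:ineqTau}.

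It then remains to minimize over $c>0$ the expression $c(A_1\rho_1^2+A_2\rho_2^2)+\tfrac{\beta}{2n}(cd-d\log c)$, up to the $c$-free remainder. This is a strictly convex scalar problem whose minimizer is $c^\star=\tfrac{\beta d/(2n)}{A_1\rho_1^2+A_2\rho_2^2+\beta d/(2n)}$; substituting it and noting that the $+\tfrac{\beta d}{2n}$ produced at the optimum cancels the $-d$ term of the divergence leaves precisely $\tfrac{\beta}{2n}d\log\!\big(1+\tfrac{2n(A_1\rho_1^2+A_2\rho_2^2)}{d\beta}\big)$, which is the content of part~i).

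For part~ii) the only structural change is in the first squared-norm term, where boundedness is unavailable. Here I would invoke $\sigma(0)=0$ together with \Cref{ass:lip} to get $|\sigma(u)|\le|u|$, hence $\|\bar\sigma(\bw_1^\top\bx)\|_2^2\le\|\bw_1^\top\bx\|_2^2$, and then use $\mathbf E\|\bw_1^\top\bx\|_2^2=\|\bar\bw_1^\top\bx\|_2^2+c\rho_1^2 D_1\|\bx\|_2^2$. After integration the first piece yields the term governed by $A_2'$, while the second yields a genuinely new \emph{coupling} contribution, quadratic in $c$ and proportional to $\rho_1^2\rho_2^2$, captured by $A_3'$. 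This coupling is the main obstacle: the objective in $c$ is no longer of the clean form $\alpha c-\gamma\log c$ but acquires a $c^2$ term, so the scalar optimization of part~i) does not apply verbatim. I would resolve this by restricting to $c\le1$ and using $c^2\le c$ (the unconstrained minimizer of the linearized objective is automatically $\le1$), which folds the quadratic coefficient into the linear one; optimizing the resulting convex objective and then using $\log(1+2x)\le2\log(1+x)$ to tidy the constants gives the stated bound \eqref{eq:ineqTau2}, with its factor $2d$ and its three-term argument $A_1\rho_1^2+A_2'\rho_2^2+A_3'\rho_1^2\rho_2^2$. The delicate point throughout is keeping the between-layer variance coupling under control while retaining a single, interpretable logarithmic factor.
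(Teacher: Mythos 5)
Your proposal is correct and follows essentially the same route as the paper's proof: a product Gaussian test measure centered at $\bar\bw$, the closed-form Kullback--Leibler divergence, the same layer-wise decomposition of the integral term (your vanishing cross term is exactly \Cref{lem:decomposition}), and a final optimization over the free variance parameter. The only cosmetic difference is that you tie the two posterior variances together through a single scalar $c$ (handling the $\rho_1^2\rho_2^2$ coupling in part ii) via $c^2\le c$ on $(0,1]$), whereas the paper keeps two free variances $\tau_1,\tau_2$ and merges the two resulting logarithms by concavity; both routes land on the stated bounds.
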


Quantities $A_1$, $A_2$, $A_2'$ and $A_3'$ defined in the 
proposition, are independent of the sample size $n$, the temperature 
parameter $\beta$ and the variances $\rho_1$ and $\rho_2$ of the 
prior distribution, but they are dimension dependent. 

There are two dual ways of drawing statistical insights from 
the above bounds on the estimation error. The first way is to 
consider $\tau_1,\tau_2$ and $D_1$ as ``tuning parameters'' of 
the algorithm, and to prove that for a suitable choice of these
parameters the predictor $\hat f_n$ is optimal. This line of
thought is further developed in \Cref{section:riskbound} below.
The second way of interpreting the obtained bound is to 
see which functions are well estimated by $\hat f_n$ based
on $\tau_1,\tau_2$ and $D_1$. This leads to the following 
result. 

\begin{theorem}\label{prop:rho}
Let $\hat f_n$ be a method of aggregation of shallow
neural networks $\mathcal F_{\mathsf W} = \{f_{\bw} 
(\bx) = \bw_2^\top \bar\sigma(\bw_1^\top \bx) : \bw_1 
\in\mathbb R^{D_0\times D_1}; \bw_2\in\mathbb
R^{D_1\times D_2}\}$, based on a prior distribution
$\pi$, satisfying \textup{PAC}-Bayes
bound~\eqref{Inequality2}. Let Assumptions
\ref{ass:lip}  and \ref{ass:gaussian} be satisfied. 
Then, for $B_\ell = \rho_\ell\sqrt{2D_{\ell-1}D_\ell}$, $\ell = 1,2$, we have 
\begin{align}\label{main_risk_bound}
    \Big(C_{\mathsf{PB}}^{-1}\mathbf E_{\mathbf P}
    \big[\|\hat f_n-
    f_{\mathbf P}\|_{\mathbb L_2(\mu)}^2\big]\Big)^{1/2} 
    \le \inf_{\substack{\|\bw_1\|_{\mathsf F}\le B_1\\
    \|\bw_2\|_{\mathsf F}\le B_2}}
    \|f_{\bar\bw}-f_{\mathbf P}\|_{\mathbb L_2(\mu)} 
    + \Big\{\frac{\beta d}{n} \log
    \Big(3 + \frac{nE}{d\beta}\Big)\Big\}^{1/2},
\end{align}
where the constant E is defined by
\begin{align}
    E = 
    \begin{cases}
    3B_2^2 (B_1^2M_2^2 + \mu(\mathcal X)M_\sigma^2),& 
    \text{if $\sigma$ satisfies \Cref{ass:bounded}}, \\
    3B_1^2B_2^2(M_2^2 + \bar M_2^2 /D_1), & 
    \text{if $\sigma$ is unbounded but $\sigma(0) = 0$}.
    \end{cases}
\end{align}
\end{theorem}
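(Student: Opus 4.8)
The plan is to feed the bounds of \Cref{prop:tau} into the already-established inequality \eqref{eq:ineApproxEstim}, which holds for every $\bar\bw\in\mathsf W$, and to simplify the resulting estimate uniformly over the ball $\{\|\bar\bw_1\|_{\mathsf F}\le B_1,\ \|\bar\bw_2\|_{\mathsf F}\le B_2\}$. The whole argument hinges on the observation that the radii $B_\ell=\rho_\ell\sqrt{2D_{\ell-1}D_\ell}$ are calibrated so that the quadratic penalties become dimension-free: since $B_1^2/\rho_1^2=2D_0D_1$ and $B_2^2/\rho_2^2=2D_1D_2$, on the ball one has $\|\bar\bw_1\|_{\mathsf F}^2/\rho_1^2+\|\bar\bw_2\|_{\mathsf F}^2/\rho_2^2\le 2(D_0D_1+D_1D_2)=2d$.

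First I would treat the bounded case. Substituting $\rho_1^2=B_1^2/(2D_0D_1)$ and $\rho_2^2=B_2^2/(2D_1D_2)$ into the logarithmic term of \eqref{eq:ineqTau}, and invoking $\|\bar\bw_2\|_{\mathsf F}\le B_2$ inside $A_1$, a short computation collapses $A_1\rho_1^2+A_2\rho_2^2$ to $\tfrac12 B_2^2(M_2^2B_1^2+\mu(\mathcal X)M_\sigma^2)=E/6$, so that the argument of the logarithm is at most $1+nE/(3d\beta)$. Combined with the bound $2d$ on the quadratic terms, \Cref{prop:tau} then gives $\textup{Rem}_n(\bar\bw)\le \frac{\beta d}{2n}\{2+\log(1+nE/(3d\beta))\}$. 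Writing $t=nE/(d\beta)$, the proof is closed by the elementary scalar inequality $1+\tfrac12\log(1+t/3)\le \log(3+t)$, valid for all $t\ge 0$ because it holds at $t=0$ (where $\log 3>1$) and the difference of its two sides has positive derivative $\tfrac{1}{2(3+t)}$. This turns the estimate into $\textup{Rem}_n(\bar\bw)\le \frac{\beta d}{n}\log(3+nE/(d\beta))$, a quantity independent of $\bar\bw$; taking square roots in \eqref{eq:ineApproxEstim} and then the infimum over the ball yields the claim.

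The unbounded case is identical in structure, the only new feature being the cross term $A_3'\rho_1^2\rho_2^2$ in \eqref{eq:ineqTau2}. After substituting the variances one finds $A_1\rho_1^2+A_2'\rho_2^2+A_3'\rho_1^2\rho_2^2\le \tfrac12 B_1^2B_2^2\big(M_2^2+\bar M_2^2/D_1+M_2^2/(2D_1^2)\big)$, and I would absorb the spurious last summand using $D_1\ge 1$, which bounds $M_2^2/(2D_1^2)$ by $\tfrac12(M_2^2+\bar M_2^2/D_1)$ and hence the bracket by $\tfrac32(M_2^2+\bar M_2^2/D_1)$; this gives $A_1\rho_1^2+A_2'\rho_2^2+A_3'\rho_1^2\rho_2^2\le E/4$. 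Now the factor $2d$ in front of the logarithm in \eqref{eq:ineqTau2} produces $\textup{Rem}_n(\bar\bw)\le \frac{\beta d}{n}\{1+\log(1+nE/(4d\beta))\}$, which is dominated by $\frac{\beta d}{n}\log(3+nE/(d\beta))$ thanks to the companion inequality $1+\log(1+t/4)\le\log(3+t)$ (again holding at $t=0$ and with positive derivative $\tfrac{1}{3+t}-\tfrac{1}{4+t}>0$).

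I expect the only delicate point to be the bookkeeping of dimension-dependent constants during the variance substitution, and specifically the verification that the extra cross term in the unbounded case can be folded into $E$ via $D_1\ge 1$ without altering the shape of the bound. The genuine analytic work having already been carried out in \Cref{prop:tau}, the remainder is this constant-tracking together with the two one-line logarithmic inequalities and a final separation of the approximation and estimation contributions in \eqref{eq:ineApproxEstim}.
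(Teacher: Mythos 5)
Your proposal is correct and follows essentially the same route as the paper: both feed the bounds of \Cref{prop:tau} into \eqref{eq:ineApproxEstim}, substitute the calibrated variances $\rho_\ell^2=B_\ell^2/(2D_{\ell-1}D_\ell)$ so that the quadratic penalty collapses to $2d$ and the logarithmic argument reduces to a multiple of $E$, and finish with the elementary inequality $1+\tfrac12\log(1+t/3)\le\log(3+t)$ (the paper phrases the choice of $\rho_\ell$ as an approximate first-order optimization, but the resulting computation is identical). Your treatment of the unbounded case, in particular absorbing the cross term $A_3'\rho_1^2\rho_2^2$ via $D_1\ge 1$ into $E/4$ and then using $1+\log(1+t/4)\le\log(3+t)$, correctly supplies the bookkeeping that the paper leaves implicit when it says the second claim follows by ``replacing $\rho_\ell$'s by their respective expressions.''
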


An important consequence of this result is that the
estimation error, upper bounded by the second term in 
\eqref{main_risk_bound}, is of order $\sqrt{D_1/n}$ 
(we assume that the input and the output dimensions 
are fixed and neglect logarithmic factors). This is 
similar to many non-parametric estimation methods. 
For instance, if the regression function is estimated 
by a histogram with $K$ bins, the estimation error is 
generally of order $\sqrt{K/n}$. Thus, the number of 
units in the hidden layer of a neural network plays 
the same role as the number of bins in a histogram. 
This parameter $D_1$ has to be chosen carefully, in 
order to control both the approximation error and 
the estimation error.

\section{Examples of application}\label{section:examples}

PAC-Bayes inequality is stated in \eqref{eq:intro} in a rather
general form. In this section, we provide examples of learning
problems and learning algorithms for which a version of
\eqref{eq:intro} is satisfied. 

\subsection{Fixed design regression}

Regression with deterministic design and additive errors is 
often used in nonparametric modeling. In the case of Gaussian
errors, it corresponds to the observations 
\begin{align}
    \bZ_i = f_{\mathbf P}(\bx_i) + \sigma\bxi_i, \qquad  \bxi_i 
    \stackrel{\textup{iid}}{\sim} \mathcal{N}(0, 
    \mathbf I_{D_2}), \quad i = 1, \ldots,n,
\end{align}
where $\bx_1, \dots, \bx_n$ are given deterministic points 
and $\mathcal{Z} = \mathbb R^{D_2}$. In this case, the measure $\mu$ 
is the empirical uniform distribution: $\mu = \frac{1}{n} \sum_{i = 1}^n
\delta_{\bx_i}$.

There are many results of type \eqref{eq:intro} in the literature
for regression with fixed design. In particular, 
\citep{Leung06,dalalyan2007aggregation,DalalyanT08,DalSal,dalalyan2012sparse,Dalalyan_AIHP,Rigollet2012sparse}  
established a PAC-Bayesian bound for the exponentially weighted
aggregate defined by $\hat f_n(\mathbf{Z},\bx) = \int_{\mathsf W}
f_{\bw}(\bx)\,\hat \theta_{n,\bw}(\mathbf Z)\, \pi(d{\bw})$ with 
\begin{align}\label{thetan}
    \hat\theta_{n,\bw}(\mathbf{Z}) 
        &= \frac{\exp\{-\frac{1}{\beta} \sum_{i = 1}^n 
        \|\bZ_i- f_{\bw}(\bx_i))\|_2^2\}}{\int_{\mathsf W} 
        \exp\{-\frac{1}{\beta}\sum_{i = 1}^n \|\bZ_i -
        f_{\bu}(\bx_i))\|_2^2\}\, \pi(d\bu)}.
\end{align}
Note that $\bw\mapsto\hat\theta_{n,{\bw}}$ is a probability
density on $(\mathsf W,\pi)$, often referred to as posterior
density. For precise conditions under which the exponentially
weighted aggregate $\hat f_n(\mathbf{Z},\bx)$ satisfies 
PAC-Bayes bound \eqref{Inequality2}, the interested reader 
is referred to the papers mentioned above.

\subsection{Random design regression}

In the setting of iid observations, sharp PAC-Bayes inequality
is valid for the mirror averaging (MA) estimator  \citep{learning2008Juditsky,dalalyan2012mirror,Gerchinovitz}. 
We define the estimator in the case of regression with random 
design, and briefly mention below that similar results hold
for density estimation and classification. 
Interested reader is referred to 
\citep{learning2008Juditsky,dalalyan2012mirror}
for more detailed and comprehensive account on 
the topic. Note that similar inequalities are obtained
for the $Q$-aggregation procedure \citep{Dai,LecueRig}.

The regression problem writes as in the
previous example
\begin{align}
    \bY_i = f_{\mathbf P}(\bX_i)+ \sigma\bxi_i, \qquad  
    \bxi_i\perp\hspace{-5pt}\perp \bX_i,
    \quad i = 1, \dots,n,
\end{align}
with  $\mathcal{Z} = \mathcal{X}\times \mathcal{Y}$, 
$\mathcal{X} \subset  \mathbb R^{D_0},\;\mathcal{Y} 
\subset\mathbb{R}^{D_2}$  and $(\bX_i,\bY_i)$ being 
iid. The natural choice of the measure $\mu$ here
is the marginal distribution of $\bX_i$ over $\mathcal{X}$. 

The mirror averaging procedure satisfying \eqref{eq:intro} 
takes the form
\begin{align}\label{eq:MAmixt}
    \hat f_n(\mathbf{Z},x) =  
    \int_{\mathsf W} f_{\bw}(x)\,\hat \theta_{n,\bw}^{\textup{MA}}
    (\mathbf Z)\, \pi(d{\bw}) = \frac{1}{n+1}\sum_{m=0}^n
    \int_{\mathsf W} f_{\bw}(x)\,\hat \theta_{m,{\bw}}(\mathbf Z) 
    \,\pi(d{\bw})
\end{align}
with $\hat\theta_{0,{\bw}} = 1$ and
\begin{align}\label{eq:MAest}
    \theta_{n,\bw}^{\textup{MA}}(\mathbf{Z}) = \frac{1}{n+1} 
    \sum_{m= 0}^n  \frac{\exp\{-\frac{1}{\beta}
    \sum_{i = 1}^m  Q(\bZ_i,f_{\bw})\}}{ 
    {\int_{\mathsf W} \exp\{ -\frac{1}{\beta} 
    \sum_{i = 1}^m Q(\bZ_i,f_{\widetilde\bw})\}
    \,\pi(d\widetilde \bw)}}, 
\end{align}
where $Q:\mathcal{Z}\times\mathbb L_2(\mu) \mapsto \mathbb R$ 
is a mapping satisfying some assumptions under which the 
minimizer of the loss $\ell: g \mapsto \ell(g,f)$ coincides 
with the minimizer of $g \mapsto \mathbf E_{\mathbf{P}}
\left[Q(\mathbf{Z},g)\right]$. In the case of regression, 
the mirror averaging estimator can be evaluated with the 
$\ell_2$-norm such that in \eqref{eq:MAest}, the function 
$Q$ is given by $Q(\bZ_i, f_{\bw}) = \|\bY_i- f_{\bw} 
(\bX_i)\|_2^2$.

\subsection{Density estimation}

Consider the case where the elements of $\mathbf{Z}^n = 
(\bZ_1,\dots, \bZ_n) \in \mathcal{Z}^n$ are iid random 
variables drawn from  a distribution having $f_{\mathbf{P}}$ 
as density with respect to a measure $\mu$. We aim to 
estimate $f_{\mathbf P}$ and measure the risk using 
the squared integrated error
\begin{align}
    \ell(\hat f_n, f) =\|\hat f_n- f\|^2_{\mathbb L_2(\mu)} 
    =  \int_{\mathcal{X}} \big( \hat f_n(\bx)-f(\bx) 
    \big)^2\mu(d\bx)    
\end{align}
such that the mapping $Q$ in \eqref{eq:MAest} can by defined 
by $Q(\bx,g) = \|g\|^2_{\mathbb L_2(\mu)}-2g(\bx)$. 

\subsection{Classification for $\Phi$-risk}

Consider the binary classification problem with 
$\mathcal{Z} = \mathbb R^{D_0}\times\{-1,+1\}$
and assume that such that $\mathbf{Z}^n =((X_1,Y_1),\dots, 
(X_n,Y_n))$ are iid observations drawn from a 
distribution $\mathbf{P}$ on $\mathcal Z$. For a twice 
differentiable convex function $\Phi$, the $\Phi$-risk 
of a classifier $g: \mathbb R^{D_0}\to \{-1,+1\}$ 
is given by $R_{\mathbf P}^\Phi[g] = \mathbf E_{\mathbf 
P}[\Phi(-Yg(X))]$. In this setting, the loss function
can be defined as $\ell (g, f) = R_{\mathbf P}^\Phi[g] 
- R_{\mathbf P}^\Phi[f]$, and the MA estimator given 
by \eqref{eq:MAmixt}--\eqref{eq:MAest} can be used
with the function $Q(z,g) = \Phi(-yg(x)) $.

\section{Approximation bounds}\label{section:approxBound}

The goal of this section is to review existing bounds 
on the approximation error of neural networks for 
different classes of functions. We are particularly 
interested in shallow networks and in bounds having 
explicit dependence on the width of the hidden layer. 
The main question of interest is the assessment of 
the distance between a given function and it's best 
approximation by a one-hidden-layer network with 
$D_1$ units in the hidden layer. Our focus is on 
Lipchtiz activation functions such as logistic, tanh, 
ReLU or quadratic (for bounded inputs). Because of 
major differences between the sigmoidal and ReLU 
activation functions, these two cases will be 
presented separately.

\subsection{Bounds for sigmoidal activation functions}

For sigmoid activation functions we distinguish the 
probabilistic approach \citep{barron1993universal, Delyon1995,maiorov2000near, maiorov2006approximation} from the deterministic and 
constructive approaches \citep{mhaskar1994dimension, petrushev1998approximation,burger2001error,cao2008estimate, costarelli2013approximation,costarelli2013multivariate}. 
For the set of univariate locally $\alpha$-Hölder continuous
functions with $\alpha\in(0,1]$,  the constructive approach 
of \citep{cao2008estimate} leads to an approximation error 
of order of $D_1^{-\alpha}$ in $\ell_\infty$-norm. For 
$\alpha>1$, \citep{costarelli2013approximation} shows that 
the approximation error is $O(D_1^{-1})$ both for 
univariate and multivariate functions.

For other classes of functions, a common feature of the 
results is the requirement of the existence of some type 
of integral transform (\textit{e.g.}, Fourier, 
Radon, wavelet) of the function $f_{\mathbf P}$. Each 
transform is tailored to a different
``smoothness'' class. An early example is the
constructive approach from 
\citep{mhaskar1994dimension} that focused on 
$2\pi$-periodic functions from $\mathbb
L_2([-\pi,\pi]^{D_0})$ with absolutely convergent
Fourier series. For such functions, the approximation 
error is shown to be $O(D_1^{-1/2})$. In the case
of random design, the seminal paper \citep{barron1993universal} 
established the upper bound $O(D_1^{-1/2})$ for 
functions $f$ satisfying $\int_{\mathbb R^{D_0}} 
\|\bz\|_2 |\mathscr F[f](\bz)|\,d\bz<\infty$, with 
$\mathscr F[f]$ being the Fourier transform of $f$. 

Note that in papers mentioned in previous paragraph, the
smoothness of the function and the dimension of the 
input variable do not appear in the error bound. 
In contrast with this, for Sobolev spaces,  
\citep{petrushev1998approximation} showed how the dimension 
of the input space and the smoothness of the Sobolev space 
impact the approximation. 
Further, building on \citep{Delyon1995},  
\citep{maiorov2000near,maiorov2006approximation} 
proved that the approximation error is $O(D_1^{-r/D_0})$ 
up to a $\log(D_1)$-factor.  We use the results of 
\citep{maiorov2000near} and  \citep{maiorov2006approximation} 
to upper bound the approximation error in \eqref{main_risk_bound}. 
For $f \in \mathbb (L_2 \cap \mathbb L_1)(\mathbb R^{D_0})$ 
with Fourier transform $\mathscr F[f](\bz) = (2\pi)^{-D_0/2} 
\int_{\mathbb{R}^{D_0}} f(\bx)\,e^{i\bz^\top\bx} d\bx$, 
we define $\mathsf 
D^\balpha f = \mathscr F^{-1}[\bz^\balpha\hat f (\bz)]$. 
The unit Sobolev ball of smoothness $r$ is then 
\begin{align}
    W^{r}_2([0,1]^{D_0},\mu) = \Big\{ f: \max_{0\le 
    |\balpha| \le r} \|\mathsf D^\balpha f\|_{\mathbb L^2
    (\mu)} \le 1\Big\}.
\end{align}
To present the precise statement of the result, let 
$ \varphi,\psi\in \mathbb L_2(\mathbb R) \cap \mathbb 
L_1(\mathbb R)$ be functions satisfying
\begin{equation}\label{eq:MaiorovCondition}
     \int_0^\infty \frac{1}{a}\,\mathscr F[{\varphi}](a z)\, 
     \overline{\mathscr F[\psi]}(az)\,da = 1, \; \forall z.
\end{equation}
We define $\Phi^r$  as the set of all functions $ 
\varphi\in \mathbb L_2(\mathbb R)\cap \mathbb L_1(\mathbb R)$ such that there exists  $\psi$  satisfying \eqref{eq:MaiorovCondition} and $\forall \rho \in [0,r], \; D^\rho \varphi\in \mathbb L_2(\mathbb R),\; D^{-\rho} \psi \in \mathbb L_1(\mathbb R) $. 

\begin{theorem}[ \citep{maiorov2006approximation}, Theorem 2.3]\label{th:approxMaiorov}
Let $\mu$ be a measure with a bounded density w.r.t.\  
the Lebesgue measure and let $\sigma$ be any sigmoid function such that the function $\varphi(t) = \sigma(t+1) - \sigma(t) 
\in\Phi^r$. Then, for any $f\in W_{2}^r([0,1]^{D_0},\mu)$, 
there exists a neural network  $f_{\bw^*}$ defined as in 
\eqref{NNrep} such that
\begin{align}
\|f-f_{\bw^*}\|_{\mathbb{L}_2(\mu)}\le c_1 B_\varphi {\log(D_1)}{D_1^{-r/D_0}}
\quad\text{and}\quad
|f_{\bw^*}(\bx)|\le c_2 B_\varphi D_1^{(\frac{1}{2}-\frac{r}{D_0})_+},\; 
\forall \bx \in [0,1]^{D_0},
\end{align}    
where  $c_1$ and $c_2$ are  constants depending only on the problem dimension $D_0$ and on the regularity parameter $r$, 
whereas $B_\varphi = \max_{\rho\in [0,r]}\left\{\|D^{\rho}\varphi\|_{\mathbb L_2(\mathbb R)},\|D^{-\rho}\psi\|_{\mathbb L_1(\mathbb R)}\right\}$.
\end{theorem}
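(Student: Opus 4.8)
The plan is to reconstruct the argument underlying this cited statement, which rests on expressing $f$ as a continuous superposition of sigmoidal ridge functions and then discretizing that superposition into $D_1$ neurons. First I would reduce the multivariate problem to a family of univariate ones through a plane-wave (Fourier/Radon) decomposition: writing $\bz = s\bu$ with $\bu\in\mathbb S^{D_0-1}$ and $s\ge 0$ in the Fourier inversion formula for $f$, one obtains a representation of $f(\bx)$ as an integral over directions $\bu$ of one-dimensional profiles evaluated at the ridge variable $t=\bu^\top\bx$. The Sobolev regularity $f\in W_2^r$ translates, via Plancherel, into decay of $\mathscr F[f]$ that will later drive the rate.

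The second ingredient is the reproducing identity \eqref{eq:MaiorovCondition}, which is precisely a Calder\'on admissibility condition: it guarantees that each univariate profile can be synthesized from dilated and translated copies of $\varphi$, with coefficients given by a continuous wavelet transform against $\psi$. Concretely, on the Fourier side \eqref{eq:MaiorovCondition} states that $\int_0^\infty a^{-1}\mathscr F[\varphi](az)\overline{\mathscr F[\psi]}(az)\,da\equiv 1$, so inserting this factor into the profile's Fourier representation and inverting expresses the profile as a superposition over scales $a$ and shifts $b$ of $\varphi((t-b)/a)$. Since $\varphi(t)=\sigma(t+1)-\sigma(t)$, each such atom is a difference of two sigmoidal neurons, so the combined representation exhibits $f$ as a continuous integral over $(\bu,a,b)$ of networks of the form \eqref{NNrep}. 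The conditions $D^\rho\varphi\in\mathbb L_2(\mathbb R)$ and $D^{-\rho}\psi\in\mathbb L_1(\mathbb R)$ for $\rho\in[0,r]$ are exactly what make the synthesis and analysis integrals converge, and their norms are collected into the constant $B_\varphi$.

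The third, and most delicate, step is to discretize the continuous integral over $(\bu,a,b)$ into a sum of $D_1$ terms while controlling the resulting $\mathbb L_2(\mu)$ error. Here I would restrict to a logarithmic band of dyadic scales $a$ (outside which the coefficients are negligible by the smoothness), and for each scale apply a cubature over the $(D_0-1)$-dimensional sphere together with a quadrature in the shift $b$. The number of nodes needed to reach accuracy $\varepsilon$ scales like $\varepsilon^{-D_0/r}$, reflecting that the approximation problem is genuinely $D_0$-dimensional with $r$ orders of smoothness available; inverting this relation with $D_1$ nodes gives the error $D_1^{-r/D_0}$, and the summation over the $O(\log D_1)$ active dyadic scales produces the extra $\log(D_1)$ factor. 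Because $\mu$ has a bounded density with respect to the Lebesgue measure, the $\mathbb L_2(\mu)$ norm is dominated by the Lebesgue $\mathbb L_2$ norm on $[0,1]^{D_0}$, so the quadrature error estimate transfers to the stated norm.

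Finally, the pointwise bound $|f_{\bw^*}(\bx)|\le c_2 B_\varphi D_1^{(1/2-r/D_0)_+}$ would follow by tracking the magnitudes of the output weights produced by the cubature: each atom is uniformly bounded (as $\varphi$ is bounded), its coefficient is controlled by $B_\varphi$ times the local quadrature weight, and summing the $D_1$ contributions by Cauchy--Schwarz yields a factor $\sqrt{D_1}$ times the $\ell_2$-norm of the coefficients, the latter decaying like $D_1^{-r/D_0}$; the max with $0$ in the exponent reflects the regime $r/D_0>1/2$ in which the sum stays bounded. The main obstacle is the discretization step: obtaining the sharp exponent $r/D_0$ requires a cubature on the sphere tuned to the Sobolev norm and a careful balance between the number of directional nodes and shift nodes, and simultaneously keeping the coefficient norms small enough for the sup-norm estimate is what makes the two conclusions hold together rather than separately.
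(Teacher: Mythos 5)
This statement is not proved in the paper at all: it is quoted verbatim from \citet{maiorov2006approximation} (Theorem 2.3 there), and the appendix contains no argument for it. So there is no internal proof to compare your proposal against; what can be assessed is whether your reconstruction of the cited result is sound. At the level of ingredients you have identified the right objects: condition \eqref{eq:MaiorovCondition} is indeed a Calder\'on-type admissibility identity that lets one synthesize univariate profiles from dilates and translates of $\varphi$, the identity $\varphi(t)=\sigma(t+1)-\sigma(t)$ converts each atom into a difference of two neurons of the form \eqref{NNrep}, and the hypotheses $D^{\rho}\varphi\in\mathbb L_2$, $D^{-\rho}\psi\in\mathbb L_1$ are what make the analysis/synthesis integrals converge and produce the constant $B_\varphi$. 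This is consistent with the lineage of the result (Delyon et al., then Maiorov--Meir, then Maiorov 2006).

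The genuine gap is the third step, which is the entire content of the theorem. You assert that ``the number of nodes needed to reach accuracy $\varepsilon$ scales like $\varepsilon^{-D_0/r}$'' --- but that is exactly the claim to be proved, justified only by a metric-entropy heuristic. A naive cubature of the continuous superposition over $(\bu,a,b)$ does not obviously yield the exponent $r/D_0$: Maurey/Monte-Carlo discretizations of such integral representations give $D_1^{-1/2}$-type rates (the Barron regime), and ridge-function approximation of $W_2^r$ is governed by the exponent $r/(D_0-1)$, not $r/D_0$, so the bookkeeping of directional nodes versus shift/scale nodes is precisely where the argument lives and cannot be waved through. The actual proof in \citet{maiorov2000near,maiorov2006approximation} does not discretize the Fourier integral directly; it first approximates the Sobolev function by a ridge-polynomial (or spline) aggregate with a controlled number of terms and coefficient sizes, and only then uses \eqref{eq:MaiorovCondition} to replace each univariate profile by finitely many shifted, dilated copies of $\varphi$, the $\log(D_1)$ factor and the sup-norm exponent $(\frac12-\frac{r}{D_0})_+$ both emerging from that two-stage bookkeeping. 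Your sketch names the destination but does not supply the mechanism that gets there; as written it would not reproduce either the rate or the pointwise bound.
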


Examples of functions $\varphi$ satisfying  
\eqref{eq:MaiorovCondition} are given in 
\citep{maiorov2000near,maiorov2006approximation} without a
detailed analysis of the properties of the resulting
function $\sigma$. The next result, proved in  \Cref{ap:proofmaiorov}, fills this gap for the example 
$\varphi(x) = \frac{1}{\sqrt{2}}e^{-{x^2}/{2}}$. This
function satisfies \eqref{eq:MaiorovCondition} with 
$\psi(x) = \frac{1}{\sqrt{2}}(1-x^2)e^{-{x^2}/{2}}$. 

\begin{lemma}\label{prop:sigmaiorov} 
Let $\varphi(x) = ({1}/{\sqrt{2}}) e^{-x^2/2}$ and define 
$\sigma: \mathbb R \mapsto \mathbb R$ by    
$\sigma(x) = \sum_{j = 1}^\infty \varphi(x-j)$. This function
$\sigma$ is 1-Lipschitz continuous, nonnegative, bounded 
from above by $2.5$ and $\lim_{x \rightarrow - \infty} 
\sigma(x) = 0$. 
\end{lemma}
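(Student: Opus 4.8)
The plan is to establish the four properties separately, writing $\varphi(u)=\tfrac{1}{\sqrt2}e^{-u^2/2}$ so that $\sigma(x)=\sum_{j\ge1}\varphi(x-j)$. Since the Gaussian tails are summable, both $\sum_{j\ge1}\varphi(x-j)$ and its formal derivative $\sum_{j\ge1}\varphi'(x-j)$ converge uniformly on compacts, so $\sigma$ is well defined, smooth, and $\sigma'(x)=\sum_{j\ge1}\varphi'(x-j)$ with $\varphi'(u)=-\tfrac{1}{\sqrt2}u\,e^{-u^2/2}$. Nonnegativity is then immediate, $\sigma$ being a sum of nonnegative terms. For the limit at $-\infty$ I would set $t=-x>0$ and note $\sigma(x)=\tfrac{1}{\sqrt2}\sum_{j\ge1}e^{-(j+t)^2/2}$; as $s\mapsto e^{-(s+t)^2/2}$ is decreasing, the sum is dominated by $\int_0^\infty e^{-(s+t)^2/2}\,ds=\int_t^\infty e^{-v^2/2}\,dv$, which tends to $0$ as $t\to\infty$.

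For the upper bound $\sigma\le 2.5$, the idea is to drop the constraint $j\ge1$ and compare with the full theta sum $\Theta(x)=\sum_{j\in\mathbb Z}e^{-(x-j)^2/2}$, giving $\sigma(x)\le \tfrac{1}{\sqrt2}\Theta(x)$ since the discarded terms are nonnegative. I would then bound $\Theta$ uniformly, either by Poisson summation, which gives $\Theta(x)=\sqrt{2\pi}\sum_{k\in\mathbb Z}e^{-2\pi^2k^2}e^{2\pi i kx}$ and hence $\Theta(x)\le\sqrt{2\pi}\sum_k e^{-2\pi^2k^2}<\sqrt{2\pi}+10^{-7}$ by the triangle inequality, or by an elementary sum-versus-integral comparison for the unimodal function $u\mapsto e^{-(x-u)^2/2}$, separating the two terms nearest the peak and integral-bounding the rest. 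Either route yields $\Theta(x)<\sqrt{2\pi}+1$, whence $\sigma(x)<\tfrac{1}{\sqrt2}(\sqrt{2\pi}+1)\approx 2.48<2.5$.

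The Lipschitz property is the crux, and the naive estimate $\sum_{j\ge1}|\varphi'(x-j)|$ fails: it is of order $\tfrac{1}{\sqrt2}\int|u|e^{-u^2/2}\,du>1$, so a triangle inequality that discards signs cannot produce a constant $\le1$. The essential point is that $\varphi'$ changes sign exactly once (positive on $(-\infty,0)$, negative on $(0,\infty)$), so I would split $\sigma'(x)=P-|N|$, where $P=\tfrac{1}{\sqrt2}\sum_{j\ge1,\,j>x}(j-x)e^{-(j-x)^2/2}\ge0$ collects the terms with $x-j<0$ and $|N|=\tfrac{1}{\sqrt2}\sum_{j\ge1,\,j<x}(x-j)e^{-(x-j)^2/2}\ge0$ those with $x-j>0$. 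Since $P,|N|\ge0$, we have $|\sigma'(x)|\le\max(P,|N|)$, so it suffices to bound each by $1$. Writing $h(u)=u\,e^{-u^2/2}$, both $P$ and $|N|$ equal $\tfrac{1}{\sqrt2}$ times a sum of $h$ over positive points lying on a unit-spaced grid, hence are at most $\tfrac{1}{\sqrt2}\sup_{\theta\in[0,1]}T(\theta)$ with $T(\theta)=\sum_{k\ge0}h(\theta+k)$.

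The remaining and hardest step is to show $T(\theta)\le\sqrt2$ on $[0,1]$. Here I would isolate the two terms $h(\theta)+h(\theta+1)$ straddling the unique maximum of $h$ at $u=1$ and integral-bound the decreasing tail by $\sum_{k\ge2}h(\theta+k)\le\int_{\theta+1}^\infty h=e^{-(\theta+1)^2/2}$, reducing the claim to the one-variable inequality $h(\theta)+h(\theta+1)+e^{-(\theta+1)^2/2}\le\sqrt2$ on $[0,1]$. This is a finite calculus check: the left-hand side is smooth and its maximum over $[0,1]$ is $\approx1.28<\sqrt2\approx1.414$. I expect this last estimate, squeezing the grid sum below $\sqrt2$ with enough margin while keeping the argument elementary, to be the main obstacle, the rest being routine tail and uniform-convergence bounds.
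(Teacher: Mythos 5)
Your proof is correct and follows essentially the same strategy as the paper's: termwise differentiation justified by uniform convergence on compacts, then bounding $\sigma$ and $\sigma'$ by keeping the one or two lattice terms nearest the mode of $e^{-u^2/2}$ (resp.\ of $h(u)=u e^{-u^2/2}$) explicit and comparing the remaining tail to an integral; note that your upper bound $(\sqrt{2\pi}+1)/\sqrt{2}$ is numerically identical to the paper's $\sqrt{\pi}+1/\sqrt{2}\approx 2.48$. The only substantive differences are organizational and work in your favour: the device $|\sigma'|\le\max(P,|N|)$ replaces the paper's separate upper/lower bounds and its $x>0$ versus $x\le 0$ case split, and by retaining both $h(\theta)$ and $h(\theta+1)$ before integral-bounding the tail from $k\ge 2$ you invoke the monotonicity of $h$ only on $[1,\infty)$, where it actually holds, whereas the paper's comparison for the term $j=[x]+2$ integrates over $[1-\{x\},2-\{x\}]$, an interval on which $h$ need not be decreasing — so your bookkeeping is the more careful of the two at the one delicate step, and the concluding one-variable inequality $h(\theta)+h(\theta+1)+e^{-(\theta+1)^2/2}\le\sqrt{2}$ (maximum about $1.28$) is an elementary check of the same nature as the paper's $(2-\{x\})e^{-(1-\{x\})^2/2}\le\sqrt{2}$.
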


\begin{figure}
    \centering
    \includegraphics[width = 0.32\textwidth]{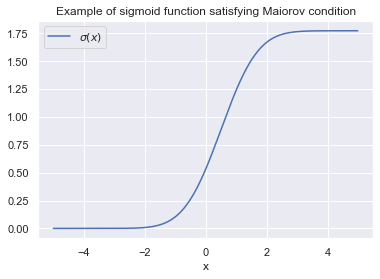}
    \includegraphics[width = 0.32\textwidth]{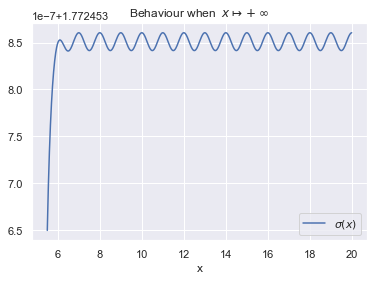}
    \includegraphics[width = 0.32\textwidth]{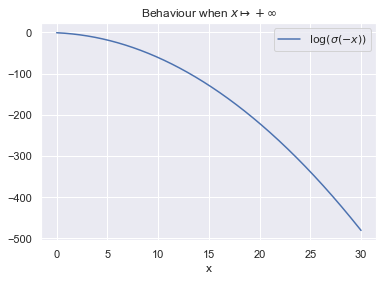}
    \caption{Activation function satisfying the Maiorov
    condition with $\varphi(x) = ({1}/{\sqrt{2}})e^{-{x^2}/{2}}$}
   \label{Fig:MaiorovCond}
\end{figure}
In \Cref{Fig:MaiorovCond} we display the function 
$\sigma$ defined in \Cref{prop:sigmaiorov}, as well 
as its limit behaviour when $|x| \rightarrow +\infty$. 
The left plot shows that $\sigma$ looks very much like 
a standard sigmoid function. The middle and the 
right plot zoom on the limit behavior at $+\infty$ 
and $-\infty$, respectively. We see, in particular, 
that $\sigma$ is not monotone when its values get close
to its upper limit, but that it is bounded everywhere
and tends exponentially fast to $0$ at $-\infty$. 
We can also consider the case where 
$\varphi(x) = \frac{(1-|x|)_+}{3}$, for which we 
displayed, in \Cref{Fig:MaiorovCond2}, the 
corresponding activation function $\sigma$. The 
function $\sigma$ is derived using the same 
methodology as for the case of \Cref{prop:sigmaiorov} 
(see also \Cref{ap:proofmaiorov}).

\subsection{Bounds for the ReLU activation function}

The literature on neural networks with ReLU 
activation has significantly grown these last years 
thanks to the computational benefits of considering 
piecewise linear activation functions 
\citep{yarotsky2017error,yarotsky2018optimal,
yarotsky2019phase,guhring2020error,lu2020deep,
shen2019deep}. We review below the results
concerning shallow networks only, leaving aside
the rich literature on approximation properties 
of deep networks.  

For a Lipschitz function $f$, approximation 
error of order $O(\eta D_1^{-{1}/{D_0}})$ is obtained in
\citep{bach2017breaking}. Following the seminal work  
\citep{MAKOVOZ199698}, results for Barron spectral 
spaces were developed in \citep{Klusowski2016UniformAB,
xu2020finite,siegel2020high}. Let $\Omega \subset 
\mathbb R^{D_0}$ be a bounded domain and $s>0$. The 
Barron spectral space of order $s$ on $\Omega$ is 
\begin{align}\label{def:Barron}
    \mathscr{B}^s(\Omega) := \Big\{f : \Omega 
    \mapsto \mathbb{C}: \|f\|_{\mathscr{B}^s(\Omega)} 
    := \inf_{f_e|\Omega = f} \int_{\mathbb{R}^d}
    \left(1+\|\bz\|_2\right)^s|
    \mathscr F[f_e] (\bz)|d\bz < \infty\Big\},
\end{align}
where $f_e$ is an $L^1(\mathbb R^d)$ extension of $f$. 
It was shown in \citep{Klusowski2016UniformAB} that the
approximation error over $\mathscr{B}^2([0,1]^{D_0})$  
is $O(D_1^{-{(D_0+2)}/{(2D_0)}})$. The same was proved 
to hold \citep{xu2020finite} for ReLU${}^k$ activation 
defined as $\sigma^{(k)}(x) = \max(0,x)^k$, when the 
target function is in $\mathscr{B}^{k+1} 
([0,1]^{D_0})$. Very recently, \citep{siegel2020high} 
made another step forward to assess the approximation error
of shallow neural networks. This result being, to the best 
of our knowledge, the tightest one for shallow networks with 
ReLU${}^k$ activations, we provide its statement in the
particular case of $k=1$.

\begin{theorem}[\citep{siegel2020high}, Theorem 3]
\label{th:SiegelXu}
Let $\Omega = [0,1]^{D_0}$ and $s \ge {1}/{2}$. 
If $f\in \mathscr{B}^s(\Omega)$ and $D_1\ge 2$, then
\begin{align}
    \|f-f_{\bw^*}\|_{\mathbb L_2(\Omega)} \le C 
    \|f\|_{\mathscr{B}^s(\Omega)} D_1^{-K}\log^m(D_1),
\end{align}
where $C$ is a constant depending on $s$ and $D_0$ 
(but not on $D_1$), whereas $K$ and $m$ are given by 
\begin{align}\label{eq:K}
    K = 
    \begin{cases}
        2 & \text{if} \; 2s\ge {3}D_0 + 4 \\
        \frac{1}{2}+ \frac{2s-1}{2(D_0+1)} & \text{if}\; 
        2s< {3}D_0 + 4
    \end{cases}
    \qquad\text{and}\qquad 
    m = 
    \begin{cases}
        0& \text{if} \; 2s < 3D_0 + 4 \\
        1 & \text{if} \; 2s > 3D_0 + 4 \\
        \frac{5}{2} & \text{if}\; 2s = 3D_0 + 4.
    \end{cases}
\end{align}

\end{theorem}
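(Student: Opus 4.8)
\section*{Proof proposal}

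The plan is to realize any $f\in\mathscr B^s(\Omega)$ as a continuous superposition of single ReLU neurons whose total ``mass'' is controlled by $\|f\|_{\mathscr B^s(\Omega)}$, and then to replace this infinite superposition by a network with $D_1$ neurons through an approximation argument that improves on the classical $D_1^{-1/2}$ convex-hull (Maurey/Jones) rate by exploiting the small $\mathbb L_2(\Omega)$ metric entropy of the ReLU dictionary. Writing $\mathbb D=\{\pm\sigma(\bw^\top\bx+b)\}$ for the (normalised) dictionary of ReLU ridge functions and $\mathcal K_1(\mathbb D)$ for the associated variation space (the closure of the scaled symmetric convex hull of $\mathbb D$ in $\mathbb L_2(\Omega)$), the whole argument splits into an embedding step, $\mathscr B^s\hookrightarrow\mathcal K_1(\mathbb D)$ with controlled norm, and an approximation-rate step for elements of $\mathcal K_1(\mathbb D)$.

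For the embedding I would fix an $\mathbb L_1$ extension $f_e$ nearly attaining the infimum in \eqref{def:Barron} and apply Fourier inversion to get $f(\bx)=\mathrm{Re}\int_{\mathbb R^{D_0}}\mathscr F[f_e](\bz)\,e^{i\bz^\top\bx}\,d\bz$ on $\Omega$, which exhibits $f$ as an integral of cosine ridge functions $\bx\mapsto\cos(\bz^\top\bx+\phi(\bz))$ against $|\mathscr F[f_e](\bz)|\,d\bz$. Restricting each one–dimensional profile to the bounded range of $\bz^\top\bx$ over $\Omega$ and using the Taylor identity with integral remainder $g(t)=g(a)+g'(a)(t-a)+\int g''(u)\,(t-u)_+\,du$, where $(t-u)_+=\sigma(t-u)$, converts every cosine profile into an integral of shifted ReLU profiles, the two affine terms being absorbed into two extra neurons. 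Since $g''$ of a frequency-$\|\bz\|_2$ cosine carries a factor $\|\bz\|_2^2$, the ReLU mass generated by frequency $\bz$ is of order $\|\bz\|_2^2$; truncating to $\|\bz\|_2\le T$ then leaves a band-limited piece lying in $\mathcal K_1(\mathbb D)$ with variation norm $\lesssim T^{(2-s)_+}\|f\|_{\mathscr B^s(\Omega)}$, while the discarded tail is bounded in $\mathbb L_2(\Omega)$ by $\lesssim T^{-s}\|f\|_{\mathscr B^s(\Omega)}$ via the spectral weight $(1+\|\bz\|_2)^s$.

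The core of the argument is then the high-order approximation engine: if the $\mathbb L_2(\Omega)$ entropy numbers of $\mathbb D$ decay like $\varepsilon_N(\mathbb D)\lesssim N^{-1/2-\alpha}$ for some $\alpha>0$, every element of $\mathcal K_1(\mathbb D)$ is approximated to accuracy $\lesssim D_1^{-1/2-\alpha}$ (times its variation norm) by a $D_1$-term combination. I would therefore spend the main effort on the sharp $\mathbb L_2(\Omega)$ metric entropy of the ReLU dictionary over the $(D_0+1)$-dimensional parameter domain (weight direction together with bias): although the crude Lipschitz bound only gives $N^{-1/(D_0+1)}$, the ridge functions are far smoother as elements of $\mathbb L_2(\Omega)$, and a careful computation separating the kink location from the slope yields the faster decay whose exponent enters the stated $(D_0+1)$ denominator. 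Applying this engine to the band-limited piece and balancing the two errors $T^{-s}\|f\|$ and $T^{(2-s)_+}D_1^{-1/2-\alpha}\|f\|$ over the truncation radius $T$ produces the smoothness-driven exponent $K=\tfrac12+\tfrac{2s-1}{2(D_0+1)}$.

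Finally I would account for the two-regime form of \eqref{eq:K}. Because $\sigma$ is a degree-one (piecewise-linear) spline, the $D_1$-term approximation of elements of $\mathcal K_1(\mathbb D)$ saturates at order $D_1^{-2}$, so the effective exponent is the minimum of the smoothness-driven $\tfrac12+\tfrac{2s-1}{2(D_0+1)}$ and the saturation value $2$; these coincide exactly when $2s=3D_0+4$, which is the origin of the case split, and the matching of the two mechanisms in the saturated and boundary regimes is what leaves the logarithmic factors $\log^m(D_1)$. The main obstacle, and the place where the sharp exponents are genuinely earned, is the $\mathbb L_2(\Omega)$ metric-entropy estimate for the ReLU dictionary together with its faithful conversion into a $D_1$-term rate; by comparison the Fourier-to-ReLU representation, the tail truncation, and the saturation bound are comparatively routine, and it is the entropy step that distinguishes this high-order result from the classical $D_1^{-1/2}$ Barron theory.
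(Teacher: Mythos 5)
You should first note that the paper contains no proof of \Cref{th:SiegelXu}: it is imported verbatim from \citet{siegel2020high} (their Theorem~3), so your attempt has to be measured against the proof in that source. Several of your ingredients are genuinely the right ones: the Fourier-inversion representation of $f$ as a superposition of cosine ridge functions, the Taylor-with-integral-remainder conversion of cosine profiles into ReLU profiles carrying a $\|\bz\|_2^2$ weight, the parameter-space smoothing of the ReLU dictionary in $\mathbb L_2$ (a kink shift of size $\delta$ with matched slopes costs only $\delta^{3/2}$, which is where the $(D_0+1)$-denominator exponent comes from), and the degree-one spline saturation explaining the cap $K=2$. One small correction in passing: the Makovoz-type lemma states that the $N$-term error is $\lesssim N^{-1/2}\varepsilon_N(\mathbb D)$, so what you need is $\varepsilon_N(\mathbb D)\lesssim N^{-3/(2(D_0+1))}$, not $\varepsilon_N(\mathbb D)\lesssim N^{-1/2-\alpha}$ as you wrote.

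The genuine gap is in your assembly step, and it is quantitative, not cosmetic: a single truncation at frequency $T$, with the band-limited part placed in $\mathcal{K}_1(\mathbb D)$ at variation norm $\lesssim T^{(2-s)_+}\|f\|_{\mathscr{B}^s(\Omega)}$ and the tail simply discarded at cost $T^{-s}\|f\|_{\mathscr{B}^s(\Omega)}$, cannot yield the exponent in \eqref{eq:K}. Writing $\kappa=\frac12+\frac{3}{2(D_0+1)}=\frac{D_0+4}{2(D_0+1)}$ for the variation-space rate, your balance $T^{-s}\asymp T^{2-s}D_1^{-\kappa}$ forces $T^2\asymp D_1^{\kappa}$ and hence, for $\frac12\le s<2$,
\begin{align}
    \|f-f_{\bw^*}\|_{\mathbb L_2(\Omega)} \lesssim D_1^{-s\kappa/2}
    = D_1^{-\frac{s(D_0+4)}{4(D_0+1)}},
    \qquad\text{whereas the theorem claims}\qquad
    K=\frac{2s+D_0}{2(D_0+1)},
\end{align}
and $\frac{s(D_0+4)}{4(D_0+1)}<\frac{2s+D_0}{2(D_0+1)}$ strictly for all $s<2$ (e.g.\ $s=1$, $D_0=2$ gives $\frac12$ versus $\frac23$); the two agree only at the single point $s=2$. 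For $s>2$ your variation norm is $O(1)$, so your rate plateaus at $\frac{D_0+4}{2(D_0+1)}<2$ and never climbs toward the saturation value, contradicting the linear growth of $K$ in $s$ up to $2s=3D_0+4$. What is missing is the multiscale allocation that Siegel and Xu actually perform: a dyadic frequency decomposition (equivalently, an intermediate cosine-network approximation) in which each frequency shell $\|\bz\|_2\sim 2^j$ receives its own budget of neurons, with each cosine profile of frequency $\nu$ replaced by $m_\nu$ ReLU neurons at the univariate free-knot spline rate $\nu^2 m_\nu^{-2}$, the budgets being optimized under $\sum_\nu m_\nu\le D_1$. It is this frequency-adapted allocation---not the entropy lemma alone---that makes $K$ grow linearly in $s$, and it is the equalization of the dyadic contributions at the critical smoothness $2s=3D_0+4$ that produces the $\log^{5/2}(D_1)$ factor; your sketch asserts the case split and the logarithms but supplies no mechanism that could generate either.
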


Note that in the papers summarized in this section, the 
values of the constants---that may depend on the input 
dimension and on the smoothness---are not specified. 
A unfortunate consequence of this is that we can not 
keep track of the information on the role of the input 
dimension in the risk bounds stated in the next section.

\section{Worst-case risk bounds over 
smoothness classes}\label{section:riskbound}

This section is devoted to upper bounds on the minimax risk. 
We present risk bounds for networks with sigmoid activation 
functions and prior to treating the case of ReLU activation.

\subsection{Sigmoid activation functions}

In this section we focus on real valued 
functions ($D_2=1$) belonging to the 
unit ball of the Sobolev space, $f_{\mathbf P} \in
W_2^r([0,1]^{D_0}, \mu)$. Using \Cref{prop:rho} and
\Cref{th:approxMaiorov}, 
we can express both the estimation and 
the approximation error as functions of the size
$D_1$ of the hidden layer for an activation 
function that satisfies the conditions of 
\Cref{th:approxMaiorov}. This leads to the risk bound
\begin{align}\label{eq:generalisationBounnd}
    C_{\mathsf{PB}}^{-1}\mathbf E_{\mathbf P}\big[
    \|\hat f_n-f_{\mathbf P}\|_{\mathbb L_2(\mu)}^2\big]
     \le 2c_1^2 B^2_\varphi 
    \frac{\log^2(D_1)}{D_1^{2r/D_0}}  + 
    \frac{4\beta D_1 D_0}{n} \log\Big(3 + 
    \frac{nE}{d\beta}\Big),
\end{align}
where $c_1 B_\varphi$ is as defined in \Cref{th:approxMaiorov}, 
and $E$ is defined in \Cref{prop:rho}. 
We clearly see that $D_1$, the width of the hidden 
layer,  controls the extent to which finer structure 
can be modeled. Reducing $D_1$ decreases the estimation
error since we have fewer parameters to estimate. But 
it increases the approximation error since we use 
a narrower class of approximators. Our goal below is
to determine the value of $D_1$ guaranteeing the best 
trade-off between approximation and estimation errors.

\begin{theorem}[Sigmoidal activation and Sobolev balls]
\label{prop:riskbound} 
Let $\mathcal X = [0,1]^{D_0}$ and $r>0$. Let $\hat f_n$ 
be an aggregate of neural networks satisfying PAC-Bayes
risk bound \eqref{Inequality2}. If the measure $\mu$ and 
the activation function $\sigma$ satisfy conditions of \Cref{th:approxMaiorov}, then the choice
\begin{align}\label{D1} 
    D_1 = \Big(\frac{\beta D_0}{n}\Big)^{-
    \frac{D_0}{2r+D_0}}
\end{align}
leads to the worst-case risk bound
\begin{align}\label{eq:bestboundGeneral}
   \sup_{\mathbf P: f_{\mathbf P} \in 
   W_2^r(\mathcal{X}, \mu)}\mathbf E_{\mathbf P}\big[
   \|\hat f_n-f_{\mathbf P}\|_{\mathbb L_2(\mu)}^2
   \big] 
    &\le g(n) \Big(\frac{\beta D_0}{n}\Big)^{{2r}/(2r+D_0)},
\end{align}
where $g(n)$ is the slowly varying function
\begin{align}
    g(n) = 2C_{\mathsf{PB}} \bigg(c_1^2 B_\varphi^2
    \log^2(n/\beta) + 2\log \Big(3 + \frac{3nB_2^2(B_1^2M_2^2 
    + \mu(\mathcal X)M_\sigma^2)}{d\beta}\Big)\bigg).
\end{align}
\end{theorem}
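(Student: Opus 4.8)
The plan is to read the statement as a one-dimensional optimization, over the (integer) width $D_1$, of the bound \eqref{eq:generalisationBounnd} that already packages the oracle inequality of \Cref{prop:rho} together with Maiorov's approximation guarantee (\Cref{th:approxMaiorov}). Since the right-hand side of \eqref{eq:generalisationBounnd} does not depend on the particular $f_{\mathbf P}$ in the unit ball $W_2^r([0,1]^{D_0},\mu)$ — the approximation constant $c_1B_\varphi$ depends only on $\varphi$, $r$ and $D_0$ — the supremum over $\mathbf P$ with $f_{\mathbf P}\in W_2^r$ is taken for free, and it suffices to minimize the two competing terms
\[
T_{\mathrm{app}}(D_1)=2c_1^2B_\varphi^2\,\frac{\log^2(D_1)}{D_1^{2r/D_0}},
\qquad
T_{\mathrm{est}}(D_1)=\frac{4\beta D_1D_0}{n}\log\Big(3+\frac{nE}{d\beta}\Big)
\]
as functions of $D_1$.

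First I would set $\lambda=\beta D_0/n$ and substitute $D_1=\lambda^{-D_0/(2r+D_0)}$ (replacing it by $\lceil D_1\rceil$, which only changes constants, since both terms are regularly varying in $D_1$). The crux is the exponent bookkeeping: the approximation term carries $D_1^{-2r/D_0}=\lambda^{2r/(2r+D_0)}$, while the estimation term carries $\lambda D_1=\lambda^{1-D_0/(2r+D_0)}=\lambda^{2r/(2r+D_0)}$, using the identity $1-D_0/(2r+D_0)=2r/(2r+D_0)$. Thus the choice \eqref{D1} is precisely the value that equalizes the two powers of $\lambda$, and both terms collapse to $\lambda^{2r/(2r+D_0)}$ up to logarithmic factors — this balancing of approximation against estimation is the whole content of \eqref{D1}.

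Next I would dispose of the logarithmic factors. From the definition of $D_1$ one has $\log(D_1)=\frac{D_0}{2r+D_0}\log\frac{n}{\beta D_0}\le\log\frac{n}{\beta}$ (using $D_0\ge1$ and $n\ge\beta$), so $T_{\mathrm{app}}\le 2c_1^2B_\varphi^2\log^2(n/\beta)\,\lambda^{2r/(2r+D_0)}$, while $T_{\mathrm{est}}$ already has the right rate with its logarithm written out. Adding the two, multiplying through by $C_{\mathsf{PB}}$, and inserting the bounded-$\sigma$ value $E=3B_2^2(B_1^2M_2^2+\mu(\mathcal X)M_\sigma^2)$ collects the constants into exactly the stated $g(n)$, which yields \eqref{eq:bestboundGeneral}. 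That $g$ is slowly varying then follows by noting that $d=D_1(D_0+1)$ grows polynomially in $n$ and, with the prior scales chosen at most polynomially in $D_1$, so does $E$; hence the surviving logarithm is $O(\log n)$ and $g(n)=O(\log^2 n)$.

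The step I expect to require the most care is not the optimization itself but the justification of \eqref{eq:generalisationBounnd} that I take as input. The infimum in \Cref{prop:rho} ranges only over weights with $\|\bw_1\|_{\mathsf F}\le B_1$ and $\|\bw_2\|_{\mathsf F}\le B_2$, where $B_\ell=\rho_\ell\sqrt{2D_{\ell-1}D_\ell}$, whereas \Cref{th:approxMaiorov} produces a good approximant $f_{\bw^*}$ via a bound on its \emph{output}, not on $\|\bw^*\|_{\mathsf F}$. To make the two meet I would choose the prior scales $\rho_1,\rho_2$ large enough that $f_{\bw^*}$ is feasible, and then verify that this admissible (polynomial-in-$D_1$) choice keeps $E$ polynomial in $n$, so that $g$ remains polylogarithmic; the squaring step via $(a+b)^2\le 2a^2+2b^2$ and the bound $d\le 2D_1D_0$ are then routine. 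One should also record that $D_1\ge1$ forces $n\ge\beta D_0$, so the bound is meaningful in the large-sample regime.
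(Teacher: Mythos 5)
Your proposal is correct and follows essentially the same route as the paper, whose proof is exactly the substitution of \eqref{D1} into \eqref{eq:generalisationBounnd} followed by the bound $\log(D_1)\le\log(n/\beta)$ and the collection of constants into $g(n)$. The extra care you flag --- ensuring the Maiorov approximant $f_{\bw^*}$ lies in the Frobenius balls $\|\bw_\ell\|_{\mathsf F}\le B_\ell$ by taking $\rho_1,\rho_2$ large enough while keeping $E$ polynomial in $n$ --- is a legitimate point that the paper leaves implicit when it asserts \eqref{eq:generalisationBounnd}, so your treatment is, if anything, more complete.
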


The proof of this theorem consists in substituting 
$D_1$ in \eqref{eq:generalisationBounnd}  by its expression 
\eqref{D1}. The obtained rate, $n^{-2r/(2r+D_0)}$, 
is the classical minimax rate of estimation over $D_0$-variate 
and $r$-smooth functions. We further discuss this result and 
compare it to prior work in \Cref{sec:discussion}.  

\subsection{ReLU activation function}

In the case of ReLU activation, we will state risk bounds
two classes: the Barron spectral space and a specific 
Sobolev ball. Let us first assume that $f_{\mathcal{P}} \in \mathscr{B}^s([0,1]^{D_0})$ and that the conditions of 
\Cref{th:SiegelXu} are satisfied. In view of 
\Cref{prop:rho} and \Cref{th:SiegelXu}, 
we have the risk bound
\begin{align}\label{eq:riskboundReLU}
    \mathbf E_{\mathbf P} \big[\|\hat f_n - f_{\mathcal
    P}\|_{\mathbb L_2(\mu)}^2\big]  
    &\le 2C_{\textsf{PB}}C^2 D_1^{-2K(s,D_0)}\log^{2m}(D_1) + 
    C_{\textsf{PB}}\frac{4\beta D_1 D_0}{n}\log
    \Big(3 + \frac{nE}{d\beta}\Big)
\end{align}
 where $C$, $K = K(s,D_0)$, $m$  are as in \Cref{th:SiegelXu} 
 and $E$ is as in \Cref{prop:rho}. The bias-variance balance
 equation takes the form  $D_1^{-2K} \asymp {\beta D_1}/{n}$ 
 and leads to the following proposition.

\begin{proposition}[ReLU activation and Barron spectral spaces]\label{prop:ReLUBarron}
Let $K = K(s,D_0)$ be as in \eqref{eq:K} and let 
$\hat f_n$ be an aggregate of neural networks 
satisfying PAC-Bayes risk bound \eqref{Inequality2}. 
If the conditions of \Cref{th:SiegelXu} hold then the  
choice $D_1 = \big({\beta D_0}/{n}\big)^{-1/(2K+1)}$ 
leads to the risk bound 
\begin{align}\label{eq:bestriskboundReLU}
  \sup_{\|f_{\mathbf P}\|_{
  \mathscr{B}^s([0,1]^{D_0})}\le 1}\mathbf E_{\mathbf P} 
  \big[\|\hat f_n - f_{\mathbf P}\|_{\mathbb L_2(\mu)}^2
  \big] &\le \bar g(n) \Big(\frac{\beta D_0}{n}\Big)^{
  {2K}/(2K+1)},
\end{align}
with the slowly varying function 
\begin{align}
    \bar g(n) &= 2C_{\mathsf{PB}} C^2\log^{2m}
    (n/\beta)+ 4C_{\mathsf{PB}}\log\left(3+
    \frac{3nB_1^2B_2^2(M_2^2 + \bar M_2^2)}{d\beta}\right)    
\end{align}
and $C$ is a constant that depends on $s$ and $D_0$ but not on $D_1$.
\end{proposition}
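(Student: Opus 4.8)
The plan is to take \eqref{eq:riskboundReLU} as the starting point, since it already fuses the approximation guarantee of \Cref{th:SiegelXu} with the estimation guarantee of \Cref{prop:rho}, and then merely optimize over the width $D_1$. Reading the right-hand side of \eqref{eq:riskboundReLU} as a bias contribution governed by the factor $D_1^{-2K}$ (times the slowly varying $\log^{2m}(D_1)$) plus a variance contribution governed by $\beta D_1 D_0/n$ (times a logarithm), the two rate-determining factors $D_1^{-2K}$ and $\beta D_1/n$ are equalized by the balance equation $D_1^{-2K}\asymp \beta D_1/n$, whose solution is exactly $D_1=(\beta D_0/n)^{-1/(2K+1)}$. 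One should take the nearest integer to this value; since \Cref{th:SiegelXu} requires $D_1\ge 2$, the statement is understood in the regime where $n/\beta$ is large enough that the chosen width exceeds $2$, which affects only the range of validity and not the rate.

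Next I would substitute this $D_1$ into the two terms of \eqref{eq:riskboundReLU}. The bias factor becomes $D_1^{-2K}=(\beta D_0/n)^{2K/(2K+1)}$, and the variance factor becomes $\beta D_1 D_0/n=(\beta D_0/n)^{1-1/(2K+1)}=(\beta D_0/n)^{2K/(2K+1)}$, so both carry the same power of $\beta D_0/n$. Factoring it out leaves the prefactor
\[
    2C_{\mathsf{PB}}C^2\,\log^{2m}(D_1)+4C_{\mathsf{PB}}\log\Big(3+\frac{nE}{d\beta}\Big),
\]
and the whole task reduces to dominating these two logarithmic factors by the slowly varying $\bar g(n)$ of the statement.

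The only place needing care is this logarithmic bookkeeping. Since $D_1=(n/(\beta D_0))^{1/(2K+1)}\le n/\beta$ (using $D_0\ge 1$, $2K+1\ge 1$ and $n\ge\beta$), we get $\log^{2m}(D_1)\le\log^{2m}(n/\beta)$, which turns the bias prefactor into the first summand of $\bar g(n)$. For the variance prefactor I would recall that in this real-valued setting $D_2=1$, so $d=D_1(D_0+1)$ and, by the unbounded case of \Cref{prop:rho}, $E=3B_1^2B_2^2(M_2^2+\bar M_2^2/D_1)$; because $D_1\ge 1$ one has $E\le 3B_1^2B_2^2(M_2^2+\bar M_2^2)$, so $\log(3+nE/(d\beta))$ is dominated by the logarithm in $\bar g(n)$. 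Assembling the two bounds yields $\mathbf E_{\mathbf P}[\|\hat f_n-f_{\mathbf P}\|_{\mathbb L_2(\mu)}^2]\le \bar g(n)(\beta D_0/n)^{2K/(2K+1)}$; and since the constant $C$ from \Cref{th:SiegelXu} bounds the bias uniformly over the ball $\{\|f_{\mathbf P}\|_{\mathscr B^s([0,1]^{D_0})}\le 1\}$ while the variance term does not depend on $f_{\mathbf P}$, taking the supremum over this ball preserves the bound and gives \eqref{eq:bestriskboundReLU}.

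I expect the genuine difficulty to lie not in this elementary optimization but upstream, in the passage to \eqref{eq:riskboundReLU} itself, namely the compatibility between the oracle inequality \eqref{main_risk_bound} and \Cref{th:SiegelXu}: the Siegel--Xu approximant $f_{\bw^*}$ must have weights inside the constraint balls $\{\|\bw_1\|_{\mathsf F}\le B_1,\ \|\bw_2\|_{\mathsf F}\le B_2\}$ with $B_\ell=\rho_\ell\sqrt{2D_{\ell-1}D_\ell}$, which forces the prior variances $\rho_1,\rho_2$ to be chosen large enough to cover it. The point to verify is that this can be arranged while keeping $B_1^2B_2^2$ at most polynomial in the problem parameters, so that its appearance inside $\log(3+nE/(d\beta))$ stays slowly varying and does not contaminate the rate. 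Once this is granted, the remaining argument is exactly the substitution and logarithmic estimates described above.
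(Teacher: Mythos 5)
Your proposal matches the paper's own derivation: the paper proves this proposition exactly by substituting the balancing choice $D_1=(\beta D_0/n)^{-1/(2K+1)}$ into \eqref{eq:riskboundReLU} and absorbing the logarithmic prefactors into $\bar g(n)$, which is precisely your argument, including the bounds $\log^{2m}(D_1)\le\log^{2m}(n/\beta)$ and $E\le 3B_1^2B_2^2(M_2^2+\bar M_2^2)$. The upstream issue you flag --- whether the Siegel--Xu approximant's weights actually lie in the Frobenius balls $\|\bw_\ell\|_{\mathsf F}\le B_\ell$ so that \eqref{eq:riskboundReLU} is legitimate --- is a real gap, but it is one the paper itself leaves unaddressed rather than a defect of your proof relative to theirs.
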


To get a risk over Sobolev spaces, we can rely on  
the inclusion $W^{s+ \nicefrac{D_0}{2} + \varepsilon,2}
(\mathcal{X}) \subset \mathscr{B}^s(\mathcal{X})$, 
true for arbitrarily small $\varepsilon>0$ 
\citep[Lemma 2.5]{xu2020finite}. This is equivalent 
to $W^{r,2}(\mathcal{X}) \subset \mathscr{B}\/^{\bar r - \nicefrac{D_0}{2}}(\mathcal{X})$ 
for every $r$, $\bar r$ such that $D_0/2\le \bar r< r$. 
Depending on the order of the Barron spectral space and 
the dimension of the problem, this might require a 
significant level of smoothness for the function $f$ we 
want to approximate. Keeping this constraint in mind, we 
proceed with the next proposition which is more easily 
comparable to \Cref{prop:riskbound}.

\begin{proposition}[ReLU activation and Sobolev space]
\label{prop:ReLUSobolev} 
Let $r\in ({D_0}/{2}, 2D_0+2)$ and let  
$\hat f_n$ be an aggregate satisfying \eqref{Inequality2}. 
For every $\bar r<r$ there is a slowly varying function 
$g_{\bar r}:\mathbb N\to \mathbb R_+$ such that
\begin{align}\label{eq:sobolevriskboundReLU}
    \sup_{f_{\mathbf P}\in W_2^r
    ([0,1]^{D_0})} \mathbf E_{\mathbf P}\big[\|\hat f_n-f_{\mathbf P}\|_{\mathbb L_2(\mu)}^2\big] 
    &\le g_{\bar r}(n) n^{-\frac{2\bar r}{2\bar r+D_0+1}}.
\end{align}
\end{proposition}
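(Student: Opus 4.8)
The plan is to reduce the statement to \Cref{prop:ReLUBarron} by means of the Sobolev-to-Barron embedding recalled just before the statement, and then to rewrite the resulting exponent in terms of $\bar r$. First I would fix $\bar r<r$ and invoke the inclusion $W_2^r([0,1]^{D_0})\subset\mathscr B^{\,\bar r-D_0/2}([0,1]^{D_0})$ from \citep[Lemma 2.5]{xu2020finite}, together with the accompanying norm bound $\|f\|_{\mathscr B^{\bar r-D_0/2}}\le c_{\bar r}\|f\|_{W_2^r}$ for a constant $c_{\bar r}$ independent of $f$. Hence every $f_{\mathbf P}$ in the unit Sobolev ball lies in the Barron ball of radius $c_{\bar r}$ for the order $s:=\bar r-D_0/2$, so that, after rescaling by $c_{\bar r}$, the hypotheses of \Cref{prop:ReLUBarron} are met with this value of $s$.

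Applying \Cref{prop:ReLUBarron} then produces, uniformly over the Sobolev ball, a bound of the form $\bar g(n)\,(\beta D_0/n)^{2K/(2K+1)}$ with $K=K(s,D_0)$ given by \eqref{eq:K}, the factor $c_{\bar r}^2$ being absorbed into the slowly varying prefactor. The decisive step is to evaluate this exponent. Because $2\bar r<2r<4D_0+4$, we have $2s=2\bar r-D_0<3D_0+4$, so we always land in the second branch of \eqref{eq:K}; this simultaneously forces $m=0$, so the $\log^{2m}$ factor disappears and $\bar g(n)$ is genuinely slowly varying. In that branch,
\begin{align}
    K=\frac12+\frac{2s-1}{2(D_0+1)}=\frac{(D_0+1)+(2\bar r-D_0-1)}{2(D_0+1)}=\frac{\bar r}{D_0+1},
\end{align}
whence $2K/(2K+1)=2\bar r/(2\bar r+D_0+1)$, which is exactly the target exponent. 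Folding the remaining $(\beta D_0)^{2\bar r/(2\bar r+D_0+1)}$ together with $\bar g(n)$ into a single slowly varying $g_{\bar r}$ then yields \eqref{eq:sobolevriskboundReLU}.

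Since each of these steps is routine once \Cref{prop:ReLUBarron} is granted, I expect the only delicate part to be the bookkeeping of the admissible parameter range. In particular, \Cref{th:SiegelXu} — on which \Cref{prop:ReLUBarron} rests — requires $s\ge 1/2$, i.e.\ $\bar r\ge (D_0+1)/2$, while the upper restriction $r<2D_0+2$ is precisely what keeps us off the saturated branch $K=2$ so that the exponent genuinely tracks $\bar r$. For values of $\bar r$ below the threshold $(D_0+1)/2$ the embedding cannot be applied directly; there I would exploit the fact that $\bar r\mapsto 2\bar r/(2\bar r+D_0+1)$ is increasing, so the faster-decaying bound obtained at a larger admissible order dominates the claimed rate for large $n$. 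Verifying that these case distinctions cover the whole stated range $r\in(D_0/2,2D_0+2)$ is the one point requiring genuine care.
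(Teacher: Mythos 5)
Your proof follows the same route as the paper's: embed $W_2^r$ into $\mathscr B^{\bar r-D_0/2}$ via \citep[Lemma 2.5]{xu2020finite}, apply \Cref{prop:ReLUBarron} with $s=\bar r-D_0/2$, and check that the second branch of \eqref{eq:K} gives $2K/(2K+1)=2\bar r/(2\bar r+D_0+1)$. Your treatment is in fact slightly more careful than the paper's, since you explicitly verify $m=0$ and handle the constraint $s\ge 1/2$ from \Cref{th:SiegelXu} (equivalently $\bar r\ge (D_0+1)/2$) by monotonicity of the exponent, a point the paper's proof passes over.
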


This result is weaker than the one of 
\Cref{prop:riskbound} in three aspects. First, it has the 
constraint $r\in ({D_0}/{2}, 2D_0+2)$ limiting the order 
of smoothness of Sobolev classes. The constraint 
$r< 2D_0+2$ stems from the fact that we want $K(s,D_0)$ to take the
value $(2s+D_0)/(2D_0+2)$. If $r\ge 2D_0+2$,  the claim 
of the last proposition holds true if we replace $2\bar r/
(2\bar r + D_0+1)$ by $4/5$. The second weakness 
is that $\bar r$, present in the rate of convergence, is
strictly smaller than the true smoothness $r$.  Finally, 
the denominator in the exponent has an additional 
term  increasing the dimension $D_0$ by 1, leading thus to a 
slightly slower rate of convergence than the minimax rate over 
Sobolev balls. This is a direct consequence of approximation 
properties of ReLU neural nets in Sobolev spaces. 
 
\subsection{Related work on risk bounds for 
(penalized) ERM neural networks }\label{sec:discussion}

For shallow neural networks with sigmoid activation, 
\citep{barron1994approximation} showed that the risk of 
the suitably penalized empirical risk minimizer (ERM) 
is $O(n^{-1/2}\log n)$, provided that the function $f$ is
very smooth ($\int \|\bz\|_1 |\mathscr F[f](z)|\,dz<\infty$). This was improved to $O \left(n^{-{2\bar r}/{(2 \bar r + D_0 
+ 5 )}}\right)$, $\forall \bar r<r$,  for specific cosine 
activation \citep{mccaffrey1994convergence}. 
To our knowledge, this is the best known result for a 
one-hidden-layer network provided by ERM. In the case of 
two-hidden-layer networks with sigmoid activation, the rate
$O(n^{-2r/(2r+D_0)}\log^3 n)$ was obtained in
\citep{bauer2019deep} for functions satisfying a generalized
hierarchical interaction model. 
Our risk bound \eqref{eq:bestboundGeneral}, of order
$O(n^{-2r/(2r+D_0)}\log n)$, matches the nonparametric
minimax rate \citep{stone1982optimal, 
tsybakov2008introduction}, and is better than known rates 
for the ERM networks with one hidden layer. Roughly 
speaking, this shows that 
aggregation acts as an additional layer, so that the 
aggregated one-hidden-layer networks achieve the
same rate as the ERM two-hidden-layer networks.

We switch now to neural networks with ReLU activation
functions. For one-hidden-layer networks, 
\citep{bach2017breaking} established a risk bound of order $n^{-{2}/{(D_0+3)}}$. On a related note, \citep{klusowski2016risk} considered bounded ramp activation functions and the low
dimensional setting $D_0\ll n$. For functions belonging
to $\mathscr{B}^2([0,1]^{D_0})$, they proved that the 
risk of the penalized ERM is $O(n^{-{(D_0+4)}/{(2D_0+6)}})$. 
This result can be directly compared to ours,  in the 
particular case $s = 2$; \Cref{prop:ReLUBarron} and the 
fact that ${2K}/{(2K+1)}= {(2s+D_0)}/{(2s+ 2D_0+ 1)}  
= (D_0 + 4)/(2D_0+5)$, yield a leading term of order
$O\big(n^{-(D_0+4)/(2D_0+5)} \big)$. This improves the 
result of \citep{klusowski2016risk} by a factor $O( n^{-\nicefrac{(D_0+4)} 
{2(3+D_0)(2D_0+5)}})$. For instance, if $D_0 = 3$ or 
$D_0 = 4$, we get the improvement factors $n^{-7/132}$ 
and $n^{-4/91}$, respectively. This improvement 
vanishes when $D_0$ increases to infinity. 
For multilayer ReLU networks, \citep{schmidt2020nonparametric}
established the counterpart of the risk bound of 
\citep{bauer2019deep} for $\beta$-Hölder functions. In 
particular, the worst-case risk was shown 
to be $O(n^{-{2\beta}/{(2\beta+D_0)}})$, see also 
\citep{suzuki2018adaptivity} for an analogous 
result over Besov spaces. 
Hence, the minimax rate is achieved by the 
ERM over multilayer ReLU networks. In view of  \Cref{prop:ReLUSobolev},  this provides a bound for the
ERM over multilayer networks smaller by a factor 
$O(n^{-{2\bar r}/ (2\bar r+D_0+1)
(2\bar r+D_0)})$ then the bound for the aggregate
of one-hidden-layer networks.
 
 \section{Conclusion and outlook}\label{section:conclusion}

We have analyzed the estimation error of an aggregate of
neural networks having one hidden layer and Lipschitz continuous
activation function, under the condition that the aggregate
satisfies the PAC-Bayes inequality. We focused our attention
on Gaussian priors and obtained risk bounds in which the
dependence on all the involved parameters is explicit. All 
these bounds on the estimation error come with explicit constants. We then combined our bounds on the estimation 
error with bounds on approximation error available in the
literature. This allowed us to prove that aggregation of
one-layer neural networks achieves the minimax risk over
conventional smoothness
classes. On the down side, since the constants in the bounds 
on the approximation error available in the literature 
are not explicit, the same is true for risk bounds of the present
work. Therefore, it would be highly relevant to refine the
existing approximation bounds to make appear all
the constants.

The results of the present work can be extended in different
directions. First, it would be interesting to consider the
problem of aggregation of deep neural networks in order to
understand possible benefits of increasing the depth. Second, 
it might be relevant to analyze the case of a prior with 
heavier tails, such as the Laplace prior or the Student prior,
with a hope to cover the case of high dimension $D_0>n$ under
some kind of sparsity assumption. Finally, another avenue of
future research is to explore the computational benefits of
considering aggregated neural networks in conjunction with
the Langevin-type algorithms.

\vfill
\newpage

\bibliography{bibliography.bib}

\begin{thebibliography}{68}
\providecommand{\natexlab}[1]{#1}
\providecommand{\url}[1]{\texttt{#1}}
\expandafter\ifx\csname urlstyle\endcsname\relax
  \providecommand{\doi}[1]{doi: #1}\else
  \providecommand{\doi}{doi: \begingroup \urlstyle{rm}\Url}\fi

\bibitem[Alquier(2009)]{AlquierP2009Pbfr}
Pierre Alquier.
\newblock {PAC-B}ayesian bounds for randomized empirical risk minimizers.
\newblock \emph{Mathematical methods of statistics}, 17\penalty0 (4):\penalty0
  279--304, 2009.

\bibitem[Alquier(2021)]{alquier2021userfriendly}
Pierre Alquier.
\newblock User-friendly introduction to {PAC-B}ayes bounds.
\newblock Technical report, 2021.
\newblock URL \url{https://arxiv.org/abs/2110.11216}.

\bibitem[Alquier and Biau(2013)]{AlquierB13}
Pierre Alquier and G{\'{e}}rard Biau.
\newblock Sparse single-index model.
\newblock \emph{J. Mach. Learn. Res.}, 14\penalty0 (1):\penalty0 243--280,
  2013.

\bibitem[Anthony and Bartlett(1999)]{anthony_bartlett_1999}
Martin Anthony and Peter~L. Bartlett.
\newblock \emph{Neural Network Learning: Theoretical Foundations}.
\newblock Cambridge University Press, 1999.

\bibitem[Ba et~al.(2020)Ba, Erdogdu, Suzuki, Wu, and Zhang]{Ba2020}
Jimmy Ba, Murat~A. Erdogdu, Taiji Suzuki, Denny Wu, and Tianzong Zhang.
\newblock Generalization of two-layer neural networks: An asymptotic viewpoint.
\newblock In \emph{{ICLR} 2020, Addis Ababa, Ethiopia, April 26-30, 2020},
  2020.

\bibitem[Bach(2017)]{bach2017breaking}
Francis Bach.
\newblock Breaking the curse of dimensionality with convex neural networks.
\newblock \emph{The Journal of Machine Learning Research}, 18\penalty0
  (1):\penalty0 629--681, 2017.

\bibitem[Barron(1993)]{barron1993universal}
Andrew~R Barron.
\newblock Universal approximation bounds for superpositions of a sigmoidal
  function.
\newblock \emph{IEEE Transactions on Information theory}, 39\penalty0
  (3):\penalty0 930--945, 1993.

\bibitem[Barron(1994)]{barron1994approximation}
Andrew~R Barron.
\newblock Approximation and estimation bounds for artificial neural networks.
\newblock \emph{Machine learning}, 14\penalty0 (1):\penalty0 115--133, 1994.

\bibitem[Bartlett et~al.(1998)Bartlett, Maiorov, and Meir]{BartlettMM98}
Peter~L. Bartlett, Vitaly Maiorov, and Ron Meir.
\newblock Almost linear {VC}-dimension bounds for piecewise polynomial
  networks.
\newblock \emph{Neural Comput.}, 10\penalty0 (8):\penalty0 2159--2173, 1998.

\bibitem[Bartlett et~al.(2019)Bartlett, Harvey, Liaw, and
  Mehrabian]{BartlettHarvey}
Peter~L. Bartlett, Nick Harvey, Christopher Liaw, and Abbas Mehrabian.
\newblock Nearly-tight {VC}-dimension and pseudodimension bounds for piecewise
  linear neural networks.
\newblock \emph{Journal of Machine Learning Research}, 20\penalty0
  (63):\penalty0 1--17, 2019.

\bibitem[Bartlett et~al.(2021)Bartlett, Montanari, and
  Rakhlin]{bartlett_montanari_rakhlin_2021}
Peter~L. Bartlett, Andrea Montanari, and Alexander Rakhlin.
\newblock Deep learning: a statistical viewpoint.
\newblock \emph{Acta Numerica}, 30:\penalty0 87–201, 2021.

\bibitem[Bauer and Kohler(2019)]{bauer2019deep}
Benedikt Bauer and Michael Kohler.
\newblock On deep learning as a remedy for the curse of dimensionality in
  nonparametric regression.
\newblock \emph{Annals of Statistics}, 47\penalty0 (4):\penalty0 2261--2285,
  2019.

\bibitem[Biggs and Guedj(2021)]{BiggsG21}
Felix Biggs and Benjamin Guedj.
\newblock Differentiable {PAC-Bayes} objectives with partially aggregated
  neural networks.
\newblock \emph{Entropy}, 23\penalty0 (10):\penalty0 1280, 2021.

\bibitem[Burger and Neubauer(2001)]{burger2001error}
Martin Burger and Andreas Neubauer.
\newblock Error bounds for approximation with neural networks.
\newblock \emph{Journal of Approximation Theory}, 112\penalty0 (2):\penalty0
  235--250, 2001.

\bibitem[Cao et~al.(2008)Cao, Xie, and Xu]{cao2008estimate}
Feilong Cao, Tingfan Xie, and Zongben Xu.
\newblock The estimate for approximation error of neural networks: a
  constructive approach.
\newblock \emph{Neurocomputing}, 71\penalty0 (4-6):\penalty0 626--630, 2008.

\bibitem[Cao and Gu(2019)]{CaoG19}
Yuan Cao and Quanquan Gu.
\newblock Tight sample complexity of learning one-hidden-layer convolutional
  neural networks.
\newblock In \emph{NeurIPS 2019, December 8-14, 2019, Vancouver, BC, Canada},
  pages 10611--10621, 2019.

\bibitem[Catoni(2007)]{catoni2007pac}
Olivier Catoni.
\newblock {PAC-B}ayesian supervised classification: The thermodynamics of
  statistical learning.
\newblock \emph{Lecture Notes-Monograph Series}, 56, 2007.

\bibitem[Costarelli and
  Spigler(2013{\natexlab{a}})]{costarelli2013approximation}
Danilo Costarelli and Renato Spigler.
\newblock Approximation results for neural network operators activated by
  sigmoidal functions.
\newblock \emph{Neural Networks}, 44:\penalty0 101--106, 2013{\natexlab{a}}.

\bibitem[Costarelli and
  Spigler(2013{\natexlab{b}})]{costarelli2013multivariate}
Danilo Costarelli and Renato Spigler.
\newblock Multivariate neural network operators with sigmoidal activation
  functions.
\newblock \emph{Neural Networks}, 48:\penalty0 72--77, 2013{\natexlab{b}}.

\bibitem[Dai et~al.(2012)Dai, Rigollet, and Zhang]{Dai}
Dong Dai, Philippe Rigollet, and Tong Zhang.
\newblock {Deviation optimal learning using greedy $Q$-aggregation}.
\newblock \emph{The Annals of Statistics}, 40\penalty0 (3):\penalty0 1878 --
  1905, 2012.

\bibitem[Dalalyan(2020)]{Dalalyan_AIHP}
Arnak~S. Dalalyan.
\newblock Exponential weights in multivariate regression and a low-rankness
  favoring prior.
\newblock \emph{Ann. Inst. Henri Poincar\'{e} Probab. Stat.}, 56\penalty0
  (2):\penalty0 1465--1483, 2020.

\bibitem[Dalalyan and Salmon(2012)]{DalSal}
Arnak~S. Dalalyan and Joseph Salmon.
\newblock {Sharp oracle inequalities for aggregation of affine estimators}.
\newblock \emph{The Annals of Statistics}, 40\penalty0 (4):\penalty0 2327 --
  2355, 2012.

\bibitem[Dalalyan and Tsybakov(2007)]{dalalyan2007aggregation}
Arnak~S Dalalyan and Alexandre~B Tsybakov.
\newblock Aggregation by exponential weighting and sharp oracle inequalities.
\newblock In \emph{International Conference on Computational Learning Theory},
  pages 97--111. Springer, 2007.

\bibitem[Dalalyan and Tsybakov(2008)]{DalalyanT08}
Arnak~S. Dalalyan and Alexandre~B. Tsybakov.
\newblock Aggregation by exponential weighting, sharp {PAC-B}ayesian bounds and
  sparsity.
\newblock \emph{Mach. Learn.}, 72\penalty0 (1-2):\penalty0 39--61, 2008.

\bibitem[Dalalyan and Tsybakov(2012{\natexlab{a}})]{dalalyan2012mirror}
Arnak~S Dalalyan and Alexandre~B Tsybakov.
\newblock Mirror averaging with sparsity priors.
\newblock \emph{Bernoulli}, 18\penalty0 (3):\penalty0 914--944,
  2012{\natexlab{a}}.

\bibitem[Dalalyan and Tsybakov(2012{\natexlab{b}})]{dalalyan2012sparse}
Arnak~S Dalalyan and Alexandre~B Tsybakov.
\newblock Sparse regression learning by aggregation and {Langevin Monte-Carlo}.
\newblock \emph{Journal of Computer and System Sciences}, 78\penalty0
  (5):\penalty0 1423--1443, 2012{\natexlab{b}}.

\bibitem[{Delyon} et~al.(1995){Delyon}, {Juditsky}, and
  {Benveniste}]{Delyon1995}
B.~{Delyon}, A.~{Juditsky}, and A.~{Benveniste}.
\newblock Accuracy analysis for wavelet approximations.
\newblock \emph{IEEE Transactions on Neural Networks}, 6\penalty0 (2):\penalty0
  332--348, 1995.
\newblock \doi{10.1109/72.363469}.

\bibitem[Dziugaite and Roy(2017)]{dziugaite2017computing}
Gintare~Karolina Dziugaite and Daniel~M Roy.
\newblock Computing nonvacuous generalization bounds for deep (stochastic)
  neural networks with many more parameters than training data.
\newblock \emph{UAI}, 2017.

\bibitem[Fan et~al.(2021)Fan, Ma, and Zhong]{Fan2021}
Jianqing Fan, Cong Ma, and Yiqiao Zhong.
\newblock {A Selective Overview of Deep Learning}.
\newblock \emph{Statistical Science}, 36\penalty0 (2):\penalty0 264 -- 290,
  2021.

\bibitem[Fortier{-}Dubois et~al.(2021)Fortier{-}Dubois, Letarte, Leblanc,
  Laviolette, and Germain]{Dubois}
Louis Fortier{-}Dubois, Ga{\"{e}}l Letarte, Benjamin Leblanc, Fran{\c{c}}ois
  Laviolette, and Pascal Germain.
\newblock Learning aggregations of binary activated neural networks with
  probabilities over representations.
\newblock \emph{CoRR}, abs/2110.15137, 2021.
\newblock URL \url{https://arxiv.org/abs/2110.15137}.

\bibitem[Gerchinovitz(2013)]{Gerchinovitz}
S\'{e}bastien Gerchinovitz.
\newblock Sparsity regret bounds for individual sequences in online linear
  regression.
\newblock \emph{J. Mach. Learn. Res.}, 14:\penalty0 729--769, 2013.
\newblock ISSN 1532-4435.

\bibitem[Guedj(2019)]{guedj2019primer}
Benjamin Guedj.
\newblock A primer on {PAC-B}ayesian learning.
\newblock \emph{arXiv preprint arXiv:1901.05353}, 2019.

\bibitem[G{\"u}hring et~al.(2020)G{\"u}hring, Kutyniok, and
  Petersen]{guhring2020error}
Ingo G{\"u}hring, Gitta Kutyniok, and Philipp Petersen.
\newblock Error bounds for approximations with deep relu neural networks in w
  s, p norms.
\newblock \emph{Analysis and Applications}, 18\penalty0 (05):\penalty0
  803--859, 2020.

\bibitem[Juditsky et~al.(2008)Juditsky, Rigollet, and
  Tsybakov]{learning2008Juditsky}
A.~Juditsky, P.~Rigollet, and A.~B. Tsybakov.
\newblock {Learning by mirror averaging}.
\newblock \emph{The Annals of Statistics}, 36\penalty0 (5):\penalty0 2183 --
  2206, 2008.

\bibitem[Klusowski and Barron(2016{\natexlab{a}})]{Klusowski2016UniformAB}
Jason~M. Klusowski and A.~Barron.
\newblock Uniform approximation by neural networks activated by first and
  second order ridge splines.
\newblock arXiv preprint arXiv:1607.07819v1, 2016{\natexlab{a}}.

\bibitem[Klusowski and Barron(2016{\natexlab{b}})]{klusowski2016risk}
Jason~M Klusowski and Andrew~R Barron.
\newblock Risk bounds for high-dimensional ridge function combinations
  including neural networks.
\newblock arXiv preprint arXiv:1607.01434, 2016{\natexlab{b}}.

\bibitem[Lecué and Rigollet(2014)]{LecueRig}
Guillaume Lecué and Philippe Rigollet.
\newblock {Optimal learning with Q-aggregation}.
\newblock \emph{The Annals of Statistics}, 42\penalty0 (1):\penalty0 211 --
  224, 2014.

\bibitem[Letarte et~al.(2019)Letarte, Germain, Guedj, and
  Laviolette]{LetarteGGL19}
Ga{\"{e}}l Letarte, Pascal Germain, Benjamin Guedj, and Fran{\c{c}}ois
  Laviolette.
\newblock Dichotomize and generalize: {PAC-B}ayesian binary activated deep
  neural networks.
\newblock In \emph{NeurIPS 2019, December 8-14, 2019, Vancouver, BC, Canada},
  pages 6869--6879, 2019.

\bibitem[Leung and Barron(2006)]{Leung06}
Gilbert Leung and Andrew~R. Barron.
\newblock Information theory and mixing least-squares regressions.
\newblock \emph{IEEE Trans. Inform. Theory}, 52\penalty0 (8):\penalty0
  3396--3410, 2006.

\bibitem[Lever et~al.(2013)Lever, Laviolette, and
  Shawe-Taylor]{Lever2013tighter}
Guy Lever, Fran\c{c}ois Laviolette, and John Shawe-Taylor.
\newblock Tighter {PAC-B}ayes bounds through distribution-dependent priors.
\newblock \emph{Theor. Comput. Sci.}, 473:\penalty0 4–28, 2013.

\bibitem[Lu et~al.(2020)Lu, Shen, Yang, and Zhang]{lu2020deep}
Jianfeng Lu, Zuowei Shen, Haizhao Yang, and Shijun Zhang.
\newblock Deep network approximation for smooth functions.
\newblock \emph{arXiv preprint arXiv:2001.03040}, 2020.

\bibitem[Maiorov and Meir(2000)]{maiorov2000near}
VE~Maiorov and Ron Meir.
\newblock On the near optimality of the stochastic approximation of smooth
  functions by neural networks.
\newblock \emph{Advances in Computational Mathematics}, 13\penalty0
  (1):\penalty0 79--103, 2000.

\bibitem[Maiorov(2006)]{maiorov2006approximation}
Vitaly Maiorov.
\newblock Approximation by neural networks and learning theory.
\newblock \emph{Journal of Complexity}, 22\penalty0 (1):\penalty0 102--117,
  2006.

\bibitem[Makovoz(1996)]{MAKOVOZ199698}
Y.~Makovoz.
\newblock Random approximants and neural networks.
\newblock \emph{Journal of Approximation Theory}, 85\penalty0 (1):\penalty0
  98--109, 1996.

\bibitem[McAllester(1999)]{mcallester1999some}
David~A McAllester.
\newblock Some {PAC-B}ayesian theorems.
\newblock \emph{Machine Learning}, 37\penalty0 (3):\penalty0 355--363, 1999.

\bibitem[McAllester(2003)]{mcallester2003pac}
David~A McAllester.
\newblock {PAC-B}ayesian stochastic model selection.
\newblock \emph{Machine Learning}, 51\penalty0 (1):\penalty0 5--21, 2003.

\bibitem[McCaffrey and Gallant(1994)]{mccaffrey1994convergence}
Daniel~F McCaffrey and A~Ronald Gallant.
\newblock Convergence rates for single hidden layer feedforward networks.
\newblock \emph{Neural Networks}, 7\penalty0 (1):\penalty0 147--158, 1994.

\bibitem[Mhaskar and Micchelli(1994)]{mhaskar1994dimension}
Hrushikesh~Narhar Mhaskar and Charles~A Micchelli.
\newblock Dimension-independent bounds on the degree of approximation by neural
  networks.
\newblock \emph{IBM Journal of Research and Development}, 38\penalty0
  (3):\penalty0 277--284, 1994.

\bibitem[Neyshabur et~al.(2017)Neyshabur, Bhojanapalli, Mcallester, and
  Srebro]{neyshabur2017exploring}
Behnam Neyshabur, Srinadh Bhojanapalli, David Mcallester, and Nati Srebro.
\newblock Exploring generalization in deep learning.
\newblock In \emph{Advances in Neural Information Processing Systems},
  volume~30. Curran Associates, Inc., 2017.

\bibitem[Perez-Ortiz et~al.(2021{\natexlab{a}})Perez-Ortiz, Rivasplata, Guedj,
  Gleeson, Zhang, Shawe-Taylor, Bober, and Kittler]{perezortiz2021learning}
Maria Perez-Ortiz, Omar Rivasplata, Benjamin Guedj, Matthew Gleeson, Jingyu
  Zhang, John Shawe-Taylor, Miroslaw Bober, and Josef Kittler.
\newblock Learning {PAC-Bayes} priors for probabilistic neural networks.
\newblock Submitted., 2021{\natexlab{a}}.

\bibitem[Perez-Ortiz et~al.(2021{\natexlab{b}})Perez-Ortiz, Rivasplata,
  Parrado-Hernandez, Guedj, and Shawe-Taylor]{perezortiz2021progress}
Maria Perez-Ortiz, Omar Rivasplata, Emilio Parrado-Hernandez, Benjamin Guedj,
  and John Shawe-Taylor.
\newblock Progress in self-certified neural networks.
\newblock In \emph{{NeurIPS 2021 workshop Bayesian Deep Learning [BDL]}},
  2021{\natexlab{b}}.

\bibitem[Petrushev(1998)]{petrushev1998approximation}
Pencho~P Petrushev.
\newblock Approximation by ridge functions and neural networks.
\newblock \emph{SIAM Journal on Mathematical Analysis}, 30\penalty0
  (1):\penalty0 155--189, 1998.

\bibitem[Rigollet and Tsybakov(2012)]{Rigollet2012sparse}
Philippe Rigollet and Alexandre~B. Tsybakov.
\newblock {Sparse Estimation by Exponential Weighting}.
\newblock \emph{Statistical Science}, 27\penalty0 (4):\penalty0 558 -- 575,
  2012.

\bibitem[Rivasplata et~al.(2018)Rivasplata, Parrado-Hernandez, Shawe-Taylor,
  Sun, and Szepesvari]{Rivasplata2018PAC}
Omar Rivasplata, Emilio Parrado-Hernandez, John~S Shawe-Taylor, Shiliang Sun,
  and Csaba Szepesvari.
\newblock {PAC-B}ayes bounds for stable algorithms with instance-dependent
  priors.
\newblock In \emph{Advances in Neural Information Processing Systems},
  volume~31. Curran Associates, Inc., 2018.

\bibitem[Schmidt-Hieber(2020)]{schmidt2020nonparametric}
Johannes Schmidt-Hieber.
\newblock Nonparametric regression using deep neural networks with relu
  activation function.
\newblock \emph{Annals of Statistics}, 48\penalty0 (4):\penalty0 1875--1897,
  2020.

\bibitem[Shen et~al.(2019)Shen, Yang, and Zhang]{shen2019deep}
Zuowei Shen, Haizhao Yang, and Shijun Zhang.
\newblock Deep network approximation characterized by number of neurons.
\newblock \emph{arXiv preprint arXiv:1906.05497}, 2019.

\bibitem[Siegel and Xu(2020)]{siegel2020high}
Jonathan~W Siegel and Jinchao Xu.
\newblock High-order approximation rates for neural networks with
  relu\({}^{\mbox{k}}\) activation functions.
\newblock \emph{arXiv preprint arXiv:2012.07205}, 2020.

\bibitem[Stone(1982)]{stone1982optimal}
Charles~J Stone.
\newblock Optimal global rates of convergence for nonparametric regression.
\newblock \emph{The Annals of Statistics}, pages 1040--1053, 1982.

\bibitem[Suzuki(2019)]{suzuki2018adaptivity}
Taiji Suzuki.
\newblock Adaptivity of deep {ReLU} network for learning in {B}esov and mixed
  smooth {B}esov spaces: optimal rate and curse of dimensionality.
\newblock In \emph{{ICLR} 2019, New Orleans, USA}, 2019.

\bibitem[Tsybakov(2008)]{tsybakov2008introduction}
Alexandre~B Tsybakov.
\newblock \emph{Introduction to nonparametric estimation}.
\newblock Springer Science \& Business Media, 2008.

\bibitem[Xie et~al.(2017)Xie, Liang, and Song]{Xie17}
Bo~Xie, Yingyu Liang, and Le~Song.
\newblock Diverse neural network learns true target functions.
\newblock In \emph{{AISTATS} 2017}, volume~54 of \emph{Proceedings of Machine
  Learning Research}, pages 1216--1224. {PMLR}, 2017.

\bibitem[Xu(2020)]{xu2020finite}
Jinchao Xu.
\newblock The finite neuron method and convergence analysis.
\newblock \emph{arXiv preprint arXiv:2010.01458}, 2020.

\bibitem[Yarotsky(2017)]{yarotsky2017error}
Dmitry Yarotsky.
\newblock Error bounds for approximations with deep relu networks.
\newblock \emph{Neural Networks}, 94:\penalty0 103--114, 2017.

\bibitem[Yarotsky(2018)]{yarotsky2018optimal}
Dmitry Yarotsky.
\newblock Optimal approximation of continuous functions by very deep relu
  networks.
\newblock In \emph{Conference on Learning Theory}, pages 639--649. PMLR, 2018.

\bibitem[Yarotsky and Zhevnerchuk(2020)]{yarotsky2019phase}
Dmitry Yarotsky and Anton Zhevnerchuk.
\newblock The phase diagram of approximation rates for deep neural networks.
\newblock In \emph{NeurIPS 2020, December 6-12}, 2020.

\bibitem[Yuditskii et~al.(2005)Yuditskii, Nazin, Tsybakov, and
  Vayatis]{yuditskii2005recursive}
Anatolii~Borisovich Yuditskii, Aleksandr~Viktorovich Nazin,
  Aleksandr~Borisovich Tsybakov, and Nikolas Vayatis.
\newblock Recursive aggregation of estimators by mirror descent algorithm with
  averaging.
\newblock \emph{Problemy Peredachi Informatsii}, 41\penalty0 (4):\penalty0
  78--96, 2005.

\bibitem[Zhong et~al.(2017)Zhong, Song, Jain, Bartlett, and
  Dhillon]{ZhongS0BD17}
Kai Zhong, Zhao Song, Prateek Jain, Peter~L. Bartlett, and Inderjit~S. Dhillon.
\newblock Recovery guarantees for one-hidden-layer neural networks.
\newblock In \emph{{ICML} 2017, Sydney, NSW, Australia, 6-11 August 2017},
  volume~70 of \emph{Proceedings of Machine Learning Research}, pages
  4140--4149. {PMLR}, 2017.

\bibitem[Zhou et~al.(2019)Zhou, Veitch, Austern, Adams, and Orbanz]{ZhouVAAO19}
Wenda Zhou, Victor Veitch, Morgane Austern, Ryan~P. Adams, and Peter Orbanz.
\newblock Non-vacuous generalization bounds at the imagenet scale: a
  {PAC-B}ayesian compression approach.
\newblock In \emph{{ICLR} 2019, New Orleans, USA, May 6-9}, 2019.

\end{thebibliography}

\begin{appendix}
\section{Proofs}\label{section:appendix}

As a preliminary remark let us note that, as a mixing measure, we expect the distribution $p$ to aggregate the predictors $f_{\bw}$ so that the resulting estimator is almost as good as the best predictors in $\mathcal{F}_{\mathsf W}$. 
A direct consequence of it is that ``a good choice'' 
of $p$ should be centered in $\bar\bw$. This is an 
heuristic way to choose the mean, and all along the
appendix we will fix the distribution of $p$ as
\begin{align}\label{ass:bwp}
 p = p_1\otimes p_2  \sim \mathcal{N}(\bar\bw_1,\tau_1^2
 \mathbf I_{D_1D_0})\otimes \mathcal{N}
 (\bar\bw_2,\tau_2^2 \mathbf I_{D_1D_2}), \qquad  
 \tau_1, \tau_2 >0
\end{align}
 where   $\bar\bw \in \text{argmin}_{\bw\in\mathsf W} \|f_{\bw}-f_{\mathbf P}\|_{\mathbb L_2(\mu)}$.
The additional condition \eqref{ass:bwp} is the starting point of our choice for $p$, it is now left to set values for the variance $(\tau_1^2, \tau_2^2)$.  

\subsection{Some useful lemmas}\label{ap:lemmas}

In what follows, when appropriate, we will write $f_{\bw_1,\bw_2}$ instead of $f_{\bw}$.

\begin{lemma}\label{lem:decomposition}
If the probability distribution $p$ is such that 
$p(d\bw) = p_1(d\bw_1)p_2(d\bw_2)$ with 
\begin{align}
    \int_{\mathsf W_2} \bw_2 p_2(d\bw_2) = \bar\bw_2
\end{align}
then 
\begin{align}
    \int_{\mathsf W}\|f_{\bw} - f_{\bar\bw}\|_{\mathbb L_2(\mu)}^2 \,p(d\bw)
    =\int_{\mathsf W}\|f_{\bw} - f_{\bw_1,\bar\bw_2}\|_{\mathbb L_2(\mu)}^2 \,p(d\bw)
    +\int_{\mathsf W_1}\|f_{\bw_1,\bar\bw_2} - f_{\bar\bw}\|_{\mathbb L_2}^2 \,p_1(d\bw_1).
\end{align}
\end{lemma}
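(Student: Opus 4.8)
The plan is to prove a Pythagorean (bias--variance) identity, exploiting the fact that for a fixed hidden layer $\bw_1$ the network $f_{\bw_1,\bw_2}(\bx) = \bw_2^\top\bar\sigma(\bw_1^\top\bx)$ is \emph{linear} in the output weights $\bw_2$. First I would insert the intermediate point $f_{\bw_1,\bar\bw_2}$ and split
\begin{align}
    f_{\bw} - f_{\bar\bw}
    = \underbrace{\big(f_{\bw_1,\bw_2} - f_{\bw_1,\bar\bw_2}\big)}_{=:A_{\bw}}
    + \underbrace{\big(f_{\bw_1,\bar\bw_2} - f_{\bar\bw}\big)}_{=:B_{\bw_1}},
\end{align}
where I stress that $B_{\bw_1}$ depends on $\bw_1$ only, since $\bar\bw = (\bar\bw_1,\bar\bw_2)$ is fixed.

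Expanding the squared $\mathbb L_2(\mu)$-norm gives
\begin{align}
    \|f_{\bw} - f_{\bar\bw}\|_{\mathbb L_2(\mu)}^2
    = \|A_{\bw}\|_{\mathbb L_2(\mu)}^2 + \|B_{\bw_1}\|_{\mathbb L_2(\mu)}^2
    + 2\langle A_{\bw}, B_{\bw_1}\rangle_{\mathbb L_2(\mu)}.
\end{align}
Integrating against $p$, and using that $B_{\bw_1}$ is a function of $\bw_1$ alone so that $\int_{\mathsf W}\|B_{\bw_1}\|_{\mathbb L_2(\mu)}^2\,p(d\bw) = \int_{\mathsf W_1}\|f_{\bw_1,\bar\bw_2} - f_{\bar\bw}\|_{\mathbb L_2(\mu)}^2\,p_1(d\bw_1)$, the two squared terms reproduce exactly the right-hand side of the claimed identity. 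Hence everything reduces to showing that the cross term $\int_{\mathsf W}\langle A_{\bw}, B_{\bw_1}\rangle_{\mathbb L_2(\mu)}\,p(d\bw)$ vanishes.

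To handle the cross term I would invoke the product structure $p = p_1\otimes p_2$ together with Fubini's theorem, integrating first over $\bw_2$. For a fixed $\bw_1$ the linearity in the output weights yields $A_{\bw}(\bx) = (\bw_2 - \bar\bw_2)^\top\bar\sigma(\bw_1^\top\bx)$, and since $B_{\bw_1}$ does not depend on $\bw_2$ I can pull the $\bw_2$-integration inside the spatial integral over $\mathcal X$:
\begin{align}
    \int_{\mathsf W_2} A_{\bw}(\bx)\,p_2(d\bw_2)
    = \Big(\int_{\mathsf W_2}(\bw_2 - \bar\bw_2)\,p_2(d\bw_2)\Big)^{\!\top}\bar\sigma(\bw_1^\top\bx) = 0,
\end{align}
the last equality being precisely the hypothesis $\int_{\mathsf W_2}\bw_2\,p_2(d\bw_2) = \bar\bw_2$. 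Consequently $\int_{\mathsf W_2}\langle A_{\bw}, B_{\bw_1}\rangle_{\mathbb L_2(\mu)}\,p_2(d\bw_2) = 0$ for every $\bw_1$, and integrating this over $p_1$ shows the cross term is zero, which completes the argument.

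The only genuinely technical point, and the step I would be most careful about, is the justification of Fubini's theorem and of interchanging the $\bw_2$-integration with the integration over $\mathcal X$; this is legitimate as soon as the integrands are absolutely integrable, which holds provided all three integrals appearing in the statement are finite (in particular for the Gaussian $p$ of \eqref{ass:bwp} used throughout the appendix, thanks to the finiteness of $M_2$ and the at-most-linear growth of $\bar\sigma$ guaranteed by \Cref{ass:lip}). Everything else is an elementary orthogonality computation in the Hilbert space $\mathbb L_2(\mu)$.
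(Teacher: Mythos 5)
Your proof is correct and follows essentially the same route as the paper's: insert the intermediate point $f_{\bw_1,\bar\bw_2}$, expand the square, and kill the cross term by integrating over $\bw_2$ first, using the product structure of $p$, the linearity of $f_{\bw_1,\bw_2}$ in $\bw_2$, and the mean condition $\int_{\mathsf W_2}\bw_2\,p_2(d\bw_2)=\bar\bw_2$. The paper performs the expansion pointwise in $\bx$ and then integrates against $\mu$ via Fubini--Tonelli, whereas you phrase it as orthogonality in $\mathbb L_2(\mu)$, but this is only a cosmetic difference.
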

\begin{proof}
Simple algebra yields
\begin{align}
    \int_{\mathsf W} \big(f_{\bw} - f_{\bar\bw}\big)^2(\bx)\,p(d\bw) 
    &= \int_{\mathsf W} (f_{\bw} - f_{\bw_1,\bar\bw_2}+ f_{\bw_1,\bar\bw_2} - f_{\bar\bw})^2(\bx)\,p(d\bw) \\
    & = \int_{\mathsf W} (f_{\bw} - f_{\bw_1,\bar\bw_2})^2(\bx)\,p(d\bw) + \int_{\mathsf W} (f_{\bw_1,\bar\bw_2} - f_{\bar\bw})^2(\bx)\,p(d\bw)\\
        &\qquad + 2  \underbrace{\int_{\mathsf W} 
        (f_{\bw_1,\bw_2} - f_{\bw_1,\bar\bw_2})(\bx)(f_{\bw_1,\bar\bw_2} - f_{\bar\bw_1,\bar\bw_2})(\bx)\,p(d\bw)}_{\triangleq A}.
\end{align}
To complete the proof it suffices to integrate the 
previous equality with respect to $\mu(d\bx)$ in virtue of Fubini-Tonelli theorem and to
check that $A = 0$. The latter property follows from the
fact that $p$ is a product measure and, for all
$\bw_1\in\mathsf W_1$,
\begin{align}
    \int_{\mathsf W_2} 
        (f_{\bw_1,\bw_2} - f_{\bw_1,\bar\bw_2})(\bx)\,p_2(d\bw_2) =
    \int_{\mathsf W_2} 
        (\bw_2 - \bar\bw_2)^\top \bar\sigma(\bw_1^\top\bx)\,p_2(d\bw_2)=0.    
\end{align}
This yields the claim of the lemma.
\end{proof}
In this section and the next one, let us define the two quantities:
\begin{align}
    G_1(\bw) &=  \|f_{\bw} - f_{\bw_1,\bar\bw_2}\|_{\mathbb L_2(\mu)}^2,\qquad\text{and}\qquad
    G_2(\bw_1) =  \|f_{\bw_1,\bar\bw_2} - f_{\bar\bw}\|_{\mathbb L_2(\mu)}^2.
\end{align}

\begin{lemma}\label{lem:G2}
If Assumptions \ref{ass:lip} and $M_2<\infty$ are satisfied,
and $p$ is chosen as in \eqref{ass:bwp}, then 
\begin{align}\label{eq:G2ineq}
    \int_{\mathsf W_1} G_2(\bw_1)\,p_1(d\bw_1) 
     &\le  D_0\big(  M_2 \tau_1 \|\bar\bw_2\|_{1,2}\big)^2
     \le C_1 D_0D_1\tau_1^2
\end{align}
with $C_1 = ( M_2 \|\bar\bw_2\|_{\mathsf F})^2$.
\end{lemma}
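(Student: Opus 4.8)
The plan is to produce the two inequalities from, respectively, a weighted Cauchy--Schwarz estimate combined with a Gaussian second-moment computation, and then a plain Cauchy--Schwarz passing from the mixed $(1,2)$-norm to the Frobenius norm. First I would write $G_2$ explicitly. Since $f_{\bw_1,\bar\bw_2}$ and $f_{\bar\bw}=f_{\bar\bw_1,\bar\bw_2}$ share the output weights $\bar\bw_2$ and differ only through the argument of $\bar\sigma$, for every $\bx$ we have $f_{\bw_1,\bar\bw_2}(\bx)-f_{\bar\bw}(\bx)=\bar\bw_2^\top\boldsymbol v(\bw_1,\bx)$ with $\boldsymbol v(\bw_1,\bx)=\bar\sigma(\bw_1^\top\bx)-\bar\sigma(\bar\bw_1^\top\bx)\in\mathbb R^{D_1}$. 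Decomposing $\bar\bw_2^\top\boldsymbol v=\sum_{j=1}^{D_1}v_j\,(\bar\bw_2)_{j\cdot}$ over the rows $(\bar\bw_2)_{j\cdot}\in\mathbb R^{D_2}$ and setting $a_j=\|(\bar\bw_2)_{j\cdot}\|_2$ (so that $\sum_j a_j=\|\bar\bw_2\|_{1,2}$), the triangle inequality gives $\|\bar\bw_2^\top\boldsymbol v\|_2\le\sum_j a_j|v_j|$.

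Next I would record two pointwise bounds, valid for each fixed $\bw_1$ and $\bx$. \Cref{ass:lip} (the $1$-Lipschitz property of $\sigma$) yields $|v_j|\le\big|\big((\bw_1-\bar\bw_1)^\top\bx\big)_j\big|$, and a weighted Cauchy--Schwarz splits the sum,
\[
\Big(\sum_j a_j|v_j|\Big)^2\le\Big(\sum_j a_j\Big)\Big(\sum_j a_j v_j^2\Big)\le\Big(\sum_j a_j\Big)\sum_{j}a_j\,\big((\bw_1-\bar\bw_1)^\top\bx\big)_j^2.
\]
Then I would integrate. Under $p_1=\mathcal N(\bar\bw_1,\tau_1^2\mathbf I_{D_1D_0})$ the entries of $\bw_1-\bar\bw_1$ are iid $\mathcal N(0,\tau_1^2)$, hence $\big((\bw_1-\bar\bw_1)^\top\bx\big)_j\sim\mathcal N(0,\tau_1^2\|\bx\|_2^2)$ with second moment $\tau_1^2\|\bx\|_2^2$, independent of $j$. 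Taking $\mathbf E_{p_1}$ of the last display therefore gives $\mathbf E_{p_1}\|\bar\bw_2^\top\boldsymbol v\|_2^2\le\|\bar\bw_2\|_{1,2}^2\,\tau_1^2\|\bx\|_2^2$.

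To conclude the first inequality, Fubini--Tonelli lets me exchange $\int_{\mathsf W_1}\cdots\,p_1(d\bw_1)$ and $\int_{\mathcal X}\cdots\,\mu(d\bx)$ in $\int G_2\,dp_1$, and using $\int_{\mathcal X}\|\bx\|_2^2\,\mu(d\bx)=D_0M_2^2$ yields $\int G_2\,dp_1\le D_0\big(M_2\tau_1\|\bar\bw_2\|_{1,2}\big)^2$. The second inequality is immediate: by Cauchy--Schwarz $\|\bar\bw_2\|_{1,2}^2=\big(\sum_j a_j\big)^2\le D_1\sum_j a_j^2=D_1\|\bar\bw_2\|_{\mathsf F}^2$, which gives the constant $C_1=(M_2\|\bar\bw_2\|_{\mathsf F})^2$.

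The only genuine choice in the argument is the weighted Cauchy--Schwarz in the second step: it is precisely what makes the mixed $(1,2)$-norm emerge after integration rather than a coarser quantity, and it must be applied \emph{before} taking the Gaussian expectation so that the off-diagonal cross terms $\mathbf E_{p_1}[|v_j||v_{j'}|]$ never have to be controlled. Everything else is routine, and the vector-valued output $D_2>1$ is handled transparently by the row decomposition of $\bar\bw_2$.
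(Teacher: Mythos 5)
Your proof is correct and follows essentially the same route as the paper's: the same decomposition $f_{\bw_1,\bar\bw_2}-f_{\bar\bw}=\bar\bw_2^\top\{\bar\sigma(\bw_1^\top\bx)-\bar\sigma(\bar\bw_1^\top\bx)\}$, the Lipschitz bound, the Gaussian second moment $\tau_1^2\|\bx\|_2^2$, Fubini--Tonelli, and a final Cauchy--Schwarz to reach $D_1\|\bar\bw_2\|_{\mathsf F}^2$. The only (harmless) deviation is that you decouple the sum over hidden units by a weighted Cauchy--Schwarz \emph{before} taking the expectation, whereas the paper uses the triangle inequality in $L^2(p)$ (Minkowski's integral inequality); this yields a marginally weaker intermediate constant --- your $\big(\sum_{i}\|(\bar\bw_2)_{i\cdot}\|_2\big)^2$ dominates the paper's $\sum_{j}\big(\sum_{i}|\bar\bw_{2,ij}|\big)^2$ --- but both are bounded by $D_1\|\bar\bw_2\|_{\mathsf F}^2$, so the stated conclusion is unaffected.
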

\begin{proof}[Proof of \Cref{lem:G2}]
We first use the fact that
$\sigma$ is 1-Lipschitz. On the one hand, in conjunction with 
the Fubini-Tonelli theorem, this yields
\begin{align}
    \int_{\mathsf W_1} G_2(\bw_1)\,p_1(d\bw_1) & = 
    \int_{\mathcal X}\int_{\mathsf W} \left\|\bar\bw_2^{\top}\left\{\bar\sigma(\bw_1^\top \bx)-\bar\sigma(\bar\bw_1^{\top} \bx)\right\}\right\|_2^2p(d\bw)\mu(d\bx)\\
    & \le  \int_{\mathcal X}\int_{\mathsf W} \sum_{j=1}^{D_2} 
    \left(\sum_{i = 1}^{D_1}|\bar\bw_{2,ij}||(\bw_1 -\bar\bw_1)_i
    ^\top\bx|\right)^2p(d\bw)\mu(d\bx)\\
    & \le  \int_{\mathcal X} \sum_{j=1}^{D_2} 
    \left(\sum_{i = 1}^{D_1}|\bar\bw_{2,ij}|
    \bigg\{\int_{\mathsf W}|(\bw_1 - \bar\bw_1)_i
    ^\top\bx|^2p(d\bw)\bigg\}^{1/2}\right)^2\mu(d\bx)\\
    &= D_0M_2^2\tau_1^2 \sum_{j=1}^{D_2} 
    \left(\sum_{i = 1}^{D_1}|\bar\bw_{2,ij}|\right)^2\le D_0
    M_2^2\tau_1^2 D_1\|\bar\bw_2\|_{\mathsf F}^2
\end{align}
and the claim of the lemma follows.
\end{proof}
In view of \Cref{lem:decomposition} and \Cref{lem:G2}, 
we have
\begin{align}\label{eq:1}
    \int_{\mathsf W} \|f_{\bw} - f_{\bar\bw}\|_{\mathbb L_2(\mu)}^2\, p(d\bw) = 
    \int_{\mathsf W} G_1(\bw)\, p(d\bw) + 
    \int_{\mathsf W} G_2(\bw_1)\, p_1(d\bw_1).
\end{align}
and \begin{align}
    \int_{\mathsf W} G_2(\bw_1)\,p(d\bw) 
     &\le   D_1 ( M_2 \|\bar\bw_2\|_{\mathsf F})^2 
     \tau^2_1.
\end{align}
We now state two distinct lemmas to bound the quantity 
$\int_{\mathsf W} G_1(\bw)\, p(d\bw)$. \Cref{lem:G1} 
account for bounded activation functions whereas 
\Cref{lem:G1ReLU} focuses on unbounded ones.

\begin{lemma}\label{lem:G1}
Under  \Cref{ass:bounded} and 
$M_2<\infty$, if $p$ is given by \eqref{ass:bwp},
we have
\begin{align}
    \int_{\mathsf W} G_1(\bw)\,p(d\bw) 
    &\le (M_\sigma\tau_2)^2 \mu(\mathcal X) D_1D_2.
\end{align}
\end{lemma}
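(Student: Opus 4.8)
The plan is to expand $G_1$ directly and exploit the product structure of $p$, integrating over the output-layer weights $\bw_2$ first. Since $f_{\bw}(\bx) - f_{\bw_1,\bar\bw_2}(\bx) = (\bw_2 - \bar\bw_2)^\top\bar\sigma(\bw_1^\top\bx)$, the quantity $G_1$ isolates only the discrepancy in the output layer, the hidden-layer weights $\bw_1$ being shared by the two networks. First I would apply the Fubini--Tonelli theorem (legitimate because the integrand is nonnegative) to reorder the integrals as
\begin{align}
    \int_{\mathsf W} G_1(\bw)\,p(d\bw)
    = \int_{\mathcal X}\int_{\mathsf W_1}\int_{\mathsf W_2}
    \big\|(\bw_2 - \bar\bw_2)^\top \bar\sigma(\bw_1^\top\bx)\big\|_2^2
    \,p_2(d\bw_2)\,p_1(d\bw_1)\,\mu(d\bx),
\end{align}
so that the innermost integral is taken over $p_2$ with both $\bw_1$ and $\bx$ held fixed.

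The main (and essentially only) computation is a Gaussian second-moment evaluation of this innermost integral. For fixed $\bw_1$ and $\bx$, the $j$-th coordinate of the vector $(\bw_2 - \bar\bw_2)^\top\bar\sigma(\bw_1^\top\bx)$ is the linear form $\sum_{i=1}^{D_1}(\bw_2 - \bar\bw_2)_{ij}\,\sigma((\bw_1^\top\bx)_i)$. Because $p_2 = \mathcal N(\bar\bw_2,\tau_2^2\mathbf I_{D_1D_2})$ is centered at $\bar\bw_2$, the increments $(\bw_2-\bar\bw_2)_{ij}$ are independent $\mathcal N(0,\tau_2^2)$ variables, so this linear form is centered Gaussian and the $p_2$-integral of its square equals its variance, $\tau_2^2\sum_{i=1}^{D_1}\sigma((\bw_1^\top\bx)_i)^2$. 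Summing over the $D_2$ output coordinates then gives
\begin{align}
    \int_{\mathsf W_2}\big\|(\bw_2 - \bar\bw_2)^\top \bar\sigma(\bw_1^\top\bx)\big\|_2^2\,p_2(d\bw_2)
    = \tau_2^2\, D_2 \sum_{i=1}^{D_1}\sigma\big((\bw_1^\top\bx)_i\big)^2.
\end{align}

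To conclude I would invoke \Cref{ass:bounded}: each summand satisfies $\sigma((\bw_1^\top\bx)_i)^2\le M_\sigma^2$, so the right-hand side is at most $M_\sigma^2\tau_2^2 D_1D_2$, \emph{uniformly} in both $\bw_1$ and $\bx$. This uniformity is exactly what trivializes the two outer integrals: integrating the constant bound against $p_1(d\bw_1)$ (total mass one) and then against $\mu(d\bx)$ (total mass $\mu(\mathcal X)$) produces $(M_\sigma\tau_2)^2\,\mu(\mathcal X)\,D_1D_2$, which is the asserted inequality. I do not expect a genuine obstacle here: the one structural fact being used is that centering $p_2$ at $\bar\bw_2$ turns the output-layer discrepancy into a mean-zero Gaussian form, whose squared $\mathbb L_2(\mu)$-norm integrates to a variance, after which boundedness of $\sigma$ decouples the estimate entirely from $\bw_1$ and $\bx$. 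The contrast with the companion lemma \Cref{lem:G1ReLU} for unbounded activations is precisely that there one cannot replace $\sigma((\bw_1^\top\bx)_i)^2$ by a constant and must instead control it through the Lipschitz property together with moment bounds on $\bw_1^\top\bx$.
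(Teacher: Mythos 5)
Your proposal is correct and follows essentially the same route as the paper: Fubini--Tonelli, an exact Gaussian second-moment computation of the inner $p_2$-integral yielding $\tau_2^2 D_2\|\bar\sigma(\bw_1^\top\bx)\|_2^2$, and then the uniform bound $\|\bar\sigma(\bw_1^\top\bx)\|_2^2\le D_1 M_\sigma^2$ from \Cref{ass:bounded}. The only cosmetic difference is that you evaluate the variance coordinate-by-coordinate while the paper writes the same computation as a quadratic form in the covariance matrix of $\bw_2-\bar\bw_2$.
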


\begin{proof}[Proof of \Cref{lem:G1}]
Using Fubini-Tonelli theorem, we get
\begin{align}
\int_{\mathsf W} G_1(\bw)\,p(d\bw) 
    & =
    \int_{\mathcal X}\int_{\mathsf W_1}
    \underbrace{\int_{\mathsf W_2}
    \Big\|(\bw_2 - \bar\bw_2)^\top \bar\sigma(\bw_1^\top
    \bx)\Big\|_{\mathbb L_2(\mu)}^2 \, p_2(d\bw_2)}
    _{:=I(\bx,\bw_1)}
    p_1(d\bw_1)\,\mu(d\bx).
\end{align}
For the inner integral, simple algebra yields
\begin{align}
I(\bx,\bw_1)& =
    \int_{\mathsf W}
    \bar\sigma(\bw_1^\top \bx)^\top(\bw_2-\bar\bw_2)
    (\bw_2-\bar\bw_2)^\top \bar\sigma(\bw_1^\top \bx)\,p(d\bw)\\
    & =
    \bar\sigma(\bw_1^\top \bx)^\top\bigg\{\int_{\mathsf W_2}(\bw_2-\bar\bw_2)
    (\bw_2-\bar\bw_2)^\top\,p_2(d\bw_2)\bigg\} \bar\sigma(\bw_1^\top \bx)\\
    & = \tau_2^2D_2 \|
    \bar\sigma(\bw_1^\top \bx)\|_2^2
    \label{aux2}.
\end{align}
Therefore, 
\begin{align}
    \int_{\mathcal X}\int_{\mathsf W_1} I(\bx,\bw_1)\,
    p_1(d\bw_1)\,\mu(d\bx) \le M_\sigma^2\mu(
    \mathcal X)\tau^2_2D_1D_2.
\end{align}
This completes the proof of the lemma.
\end{proof}

\begin{lemma}\label{lem:G1ReLU}
Let $\bar M_2 = \|\int_{\mathcal X} \bx\bx^\top\mu(d\bx)\|_{\mathsf{sp}}$
be the spectral norm of the ``covariance'' matrix of the design. 
Under \Cref{ass:lip}, if $p$ is given by \eqref{ass:bwp} 
and $\sigma(0) = 0$, we have 
\begin{align}
    \int_{\mathsf W} G_1(\bw)\,p(d\bw) 
    &\le M_2^2  D_0 D_1 D_2\tau_1^2\tau_2^2 + 
    \bar M_2^2  D_2 \|\bar\bw_1\|_{\mathsf F}^2\tau_2^2.
\end{align}
\end{lemma}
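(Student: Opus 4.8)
The plan is to follow exactly the two-stage integration used in the proof of \Cref{lem:G1} --- integrate out $\bw_2$ first, then $\bw_1$ --- and to replace the boundedness of $\sigma$, which is no longer available, by the linear growth bound furnished by $\sigma(0)=0$ together with \Cref{ass:lip}. The first stage is literally identical to the bounded case. Since $f_{\bw}-f_{\bw_1,\bar\bw_2}=(\bw_2-\bar\bw_2)^\top\bar\sigma(\bw_1^\top\bx)$ and $p_2=\mathcal N(\bar\bw_2,\tau_2^2\mathbf I_{D_1D_2})$, the identity $\int_{\mathsf W_2}(\bw_2-\bar\bw_2)(\bw_2-\bar\bw_2)^\top p_2(d\bw_2)=\tau_2^2 D_2\mathbf I_{D_1}$ and Fubini--Tonelli give, exactly as in \eqref{aux2},
\[
    \int_{\mathsf W} G_1(\bw)\,p(d\bw)=\tau_2^2 D_2\int_{\mathcal X}\int_{\mathsf W_1}\|\bar\sigma(\bw_1^\top\bx)\|_2^2\,p_1(d\bw_1)\,\mu(d\bx).
\]
So the only genuinely new work concerns how the inner norm $\|\bar\sigma(\bw_1^\top\bx)\|_2^2$ is controlled.

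Where \Cref{lem:G1} invokes $|\sigma|\le M_\sigma$, I would instead note that $\sigma(0)=0$ and \Cref{ass:lip} yield $|\sigma(u)|=|\sigma(u)-\sigma(0)|\le|u|$, hence $\|\bar\sigma(\bw_1^\top\bx)\|_2^2\le\|\bw_1^\top\bx\|_2^2=\bx^\top\bw_1\bw_1^\top\bx$. Integrating over $\bw_1\sim\mathcal N(\bar\bw_1,\tau_1^2\mathbf I_{D_0D_1})$ and using
\[
    \int_{\mathsf W_1}\bw_1\bw_1^\top\,p_1(d\bw_1)=\bar\bw_1\bar\bw_1^\top+\tau_1^2 D_1\mathbf I_{D_0},
\]
where the factor $D_1$ arises from summing the $D_1$ independent columns (the analogue of $\mathbb E[ZZ^\top]=D_1\mathbf I_{D_0}$), I obtain $\int_{\mathsf W_1}\|\bw_1^\top\bx\|_2^2\,p_1(d\bw_1)=\|\bar\bw_1^\top\bx\|_2^2+\tau_1^2 D_1\|\bx\|_2^2$.

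It then remains to integrate over $\bx$. The variance part is immediate: $\tau_1^2 D_1\int_{\mathcal X}\|\bx\|_2^2\,\mu(d\bx)=\tau_1^2 D_1 D_0 M_2^2$ by the definition of $M_2$. For the mean part I would write $\int_{\mathcal X}\|\bar\bw_1^\top\bx\|_2^2\,\mu(d\bx)=\mathrm{tr}\big(\bar\bw_1\bar\bw_1^\top\Sigma\big)$ with $\Sigma=\int_{\mathcal X}\bx\bx^\top\mu(d\bx)$, and control it by the trace inequality $\mathrm{tr}(AB)\le\|A\|_{\mathsf{sp}}\,\mathrm{tr}(B)$ for positive semidefinite $A,B$, giving $\mathrm{tr}(\bar\bw_1\bar\bw_1^\top\Sigma)\le\|\Sigma\|_{\mathsf{sp}}\|\bar\bw_1\|_{\mathsf F}^2$. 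Collecting the two contributions and multiplying by the prefactor $\tau_2^2 D_2$ reproduces the two summands of the claim, namely $M_2^2 D_0 D_1 D_2\tau_1^2\tau_2^2$ and $\|\Sigma\|_{\mathsf{sp}}D_2\|\bar\bw_1\|_{\mathsf F}^2\tau_2^2$.

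The genuinely nonroutine point is the treatment of the mean term $\|\bar\bw_1^\top\bx\|_2^2$: in the bounded case the $\bw_1$-integral collapses because $\|\bar\sigma\|_2^2\le D_1 M_\sigma^2$, whereas unboundedness forces me to retain the full quadratic form in $\bw_1$, and bounding its $\bx$-average by the \emph{spectral} norm of $\Sigma$ rather than by the crude trace bound $\mathrm{tr}(\Sigma)=D_0 M_2^2$ is exactly what isolates the design-dependent constant. The one thing I would check carefully is the normalization: the quantity that appears naturally is $\|\Sigma\|_{\mathsf{sp}}$, which must be identified with $\bar M_2^2$ to match the second summand in the statement and the constants $A_2'$ and $E$ used later; consequently the defining line of the lemma, which sets $\bar M_2$ equal to $\|\Sigma\|_{\mathsf{sp}}$, should read $\bar M_2^2=\|\Sigma\|_{\mathsf{sp}}$.
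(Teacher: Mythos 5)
Your proof is correct and follows essentially the same route as the paper's: integrate out $\bw_2$ via \eqref{aux2}, bound $\|\bar\sigma(\bw_1^\top\bx)\|_2\le\|\bw_1^\top\bx\|_2$ using $\sigma(0)=0$ and \Cref{ass:lip}, split into the centered (variance) part giving $M_2^2D_0D_1\tau_1^2$ and the mean part $\int\|\bar\bw_1^\top\bx\|_2^2\,\mu(d\bx)$ controlled by the spectral norm of $\Sigma$. Your remark on the normalization is also well taken: for the stated bound to hold one must read $\bar M_2^2=\|\Sigma\|_{\mathsf{sp}}$ (consistent with the convention $M_2^2=D_0^{-1}\int\|\bx\|_2^2\,\mu(d\bx)$), and the final step is an inequality via $\mathrm{tr}(\bar\bw_1\bar\bw_1^\top\Sigma)\le\|\Sigma\|_{\mathsf{sp}}\|\bar\bw_1\|_{\mathsf F}^2$ rather than the equality displayed in the paper.
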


\begin{proof}[Proof of \Cref{lem:G1ReLU}]
Using \eqref{aux2}, we get
\begin{align}
    \int_{\mathsf W} G_1(&\bw)\,p(d\bw) 
    =  \tau_2^2D_2\int_{\mathcal X}\int_{\mathsf W_1} \|
    \bar\sigma(\bw_1^\top \bx)\|_2^2\,p_1(d\bw_1)\,\mu(d\bx)\\
    &\leq  \tau_2^2D_2\int_{\mathcal X}\int_{\mathsf W_1} 
    \|\bw_1^\top \bx\|_2^2\,p_1(d\bw_1)\,\mu(d\bx)\\
    &=  \tau_2^2D_2\int_{\mathcal X}\int_{\mathsf W_1} 
    \|(\bw_1-\bar\bw_1)^\top \bx\|_2^2\,p_1(d\bw_1)\,\mu(d\bx) 
    + \tau_2^2D_2\int_{\mathcal X} 
    \|(\bar\bw_1)^\top \bx\|_2^2\,\mu(d\bx)\\
    & = M_2^2 D_0D_1D_2 \tau_1^2\tau_2^2
    + \bar M_2^2D_2\|\bar\bw_1\|_{\mathsf F}^2\tau_2^2.
\end{align}
This completes the proof of the lemma.
\end{proof}

\begin{lemma}\label{lem:estimationBound} Under \Cref{ass:lip} and $M_2<\infty$, 
if $p$ is given by \eqref{ass:bwp}, then
\begin{align}\label{int_bound2}
    \int_{\mathsf W} 
    \|f_{\bw} - f_{\bar\bw}\|_{\mathbb L_2(\mu)}^2\, p(d\bw) 
    \le M^2_2\|\bar\bw_2\|_{\mathsf F}^2 D_0D_1 \tau^2_1 
    + \bar M_2^2D_2\|\bar\bw_1\|_{\mathsf F}^2\tau_2^2 
    + M_2^2 D_0D_1D_2 \tau_1^2\tau_2^2.
\end{align}
If, in addition, \Cref{ass:bounded} is satisfied, then
\begin{align}\label{int_bound1}
    \int_{\mathsf W} \|f_{\bw} - f_{\bar\bw}\|_{\mathbb L_2(\mu)}^2\, p(d\bw) \le    M^2_2 
    \|\bar\bw_2\|_{\mathsf F}^2 D_0D_1 \tau^2_1 + \mu(\mathcal X) 
    M^2_\sigma D_1D_2\tau^2_2.
\end{align}
\end{lemma}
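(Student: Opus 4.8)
The plan is to treat this lemma as a direct assembly of the component bounds established in \Cref{lem:decomposition}, \Cref{lem:G2}, \Cref{lem:G1} and \Cref{lem:G1ReLU}, since all the genuine analytic work has already been carried out there. The starting point is the orthogonal decomposition of \Cref{lem:decomposition}, valid because the measure $p$ in \eqref{ass:bwp} is a product measure whose second marginal has mean $\bar\bw_2$. This yields, as already recorded in \eqref{eq:1},
\begin{align}
    \int_{\mathsf W} \|f_{\bw} - f_{\bar\bw}\|_{\mathbb L_2(\mu)}^2\, p(d\bw)
    = \int_{\mathsf W} G_1(\bw)\, p(d\bw)
    + \int_{\mathsf W_1} G_2(\bw_1)\, p_1(d\bw_1),
\end{align}
so that it suffices to control each of the two integrals separately and add them.

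For the $G_2$-term, which captures the contribution of perturbing the hidden-layer weights $\bw_1$, I would invoke \Cref{lem:G2} directly. Under \Cref{ass:lip} and $M_2<\infty$ it gives $\int_{\mathsf W_1} G_2(\bw_1)\,p_1(d\bw_1) \le M_2^2\,\|\bar\bw_2\|_{\mathsf F}^2\, D_0 D_1\, \tau_1^2$, which is precisely the first summand appearing in both \eqref{int_bound2} and \eqref{int_bound1}. Since this bound requires only the Lipschitz hypothesis and finiteness of $M_2$, it carries over unchanged to both parts of the statement.

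It then remains to bound the $G_1$-term, and this is where the two cases diverge. For the general claim \eqref{int_bound2}, under the assumption that $\sigma$ is Lipschitz with $\sigma(0)=0$, I would apply \Cref{lem:G1ReLU} to obtain
\begin{align}
    \int_{\mathsf W} G_1(\bw)\,p(d\bw)
    \le M_2^2\, D_0 D_1 D_2\, \tau_1^2\tau_2^2
    + \bar M_2^2\, D_2\, \|\bar\bw_1\|_{\mathsf F}^2\, \tau_2^2,
\end{align}
and summing with the $G_2$-bound above reproduces \eqref{int_bound2} termwise. For the refined claim \eqref{int_bound1}, under the additional boundedness \Cref{ass:bounded} I would instead use \Cref{lem:G1}, giving $\int_{\mathsf W} G_1(\bw)\,p(d\bw)\le M_\sigma^2\,\mu(\mathcal X)\, D_1 D_2\,\tau_2^2$; adding this to the $G_2$-bound yields \eqref{int_bound1}.

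The assembly itself is routine, and I expect no real obstacle at this level: the content of the lemma lives entirely in its components. If anything, the delicate step is the one already dispatched in \Cref{lem:G1ReLU}, where, $\sigma$ being unbounded, the uniform pointwise control $\|\bar\sigma(\bw_1^\top\bx)\|_2\le\|\bw_1^\top\bx\|_2$ (available precisely because $\sigma$ is $1$-Lipschitz and vanishes at the origin) must replace the cruder estimate by $M_\sigma$; this substitution is what forces the appearance of the mixed term $\tau_1^2\tau_2^2$ and of the design quantity $\bar M_2$ in \eqref{int_bound2}. Consequently, the only choice to make in the present proof is which $G_1$-bound to insert according to whether \Cref{ass:bounded} is assumed.
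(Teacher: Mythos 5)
Your proposal is correct and follows exactly the paper's own proof: decompose via \Cref{lem:decomposition}, bound the $G_2$-term by \Cref{lem:G2}, and bound the $G_1$-term by \Cref{lem:G1} or \Cref{lem:G1ReLU} according to whether \Cref{ass:bounded} holds. You also rightly note that the bound \eqref{int_bound2} tacitly uses $\sigma(0)=0$ (required by \Cref{lem:G1ReLU}), a hypothesis the lemma's statement omits but which is how the paper itself applies it.
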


\begin{proof}
In \Cref{lem:decomposition}  we have checked that
\begin{align}
    \int_{\mathsf W} \|f_{\bw} - f_{\bar\bw}\|_{\mathbb L_2(\mu)}^2\, p(d\bw) = 
    \int_{\mathsf W} G_1(\bw)\, p(d\bw) + 
    \int_{\mathsf W} G_2(\bw_1)\, p_1(d\bw_1).
\end{align}
\Cref{lem:G1} and \Cref{lem:G2} take care of both integrals in the right hand side of the equality for bounded activation functions and we directly get
\eqref{int_bound1}. Similarly, \Cref{lem:G1ReLU} and 
\Cref{lem:G2} can be applied for unbounded activation functions, leading to \eqref{int_bound2}. 
\end{proof}



\subsection{Proof of Proposition \ref{prop:tau}}\label{ap:propOptim1}
Recall that the goal is to find an upper bound
for the remainder term
\begin{align}\label{eq:rembis}
    \textup{Rem}_n(\bar\bw) \triangleq \inf_{
    p \in \mathcal{P}_1(\mathcal{F}_{\mathsf W})}
    \bigg\{\int_{\mathsf W} 
    \|f_{\bw}-f_{\bar\bw}\|_{\mathbb L_2(\mu)}^2
    \,p(d\bw) + \frac{\beta}{n} \,D_{\textsf{KL}}(p||\pi)
    \bigg\}.
\end{align}
We start this proof by considering the case where Assumptions 
\ref{ass:lip}, \ref{ass:bounded} and \ref{ass:gaussian} are 
satisfied. We choose as 
$p$ the product of two spherical Gaussian distributions 
with variances $\tau_1^2$ and $\tau_2^2$, as specified 
in \eqref{ass:bwp}. In this case, the Kullback-Leibler divergence $D_{\textsf{KL}}(p||\pi)$ is given by 
\begin{align}\label{eq:KL}
    D_{\textsf{KL}}(p||\pi) =
    \frac{1}{2}\sum_{\ell=1}^2 \bigg\{ 
    \frac{\|\bar\bw_\ell\|_{\mathsf F}^2}{\rho_\ell^2} 
    + D_{\ell-1}D_\ell\bigg[\left(\frac{\tau_\ell
    }{\rho_\ell}\right)^2-1 -  \log\left(\frac{ 
    \tau^2_\ell}{\rho^2_\ell}\right) \bigg]\bigg\}.
\end{align}
It is now left to find good values for $\tau^2_1$ 
and $\tau_2^2$. Combining with the result \eqref{int_bound1} 
of \Cref{lem:estimationBound}, we get the inequality
\begin{align}\label{eq:rem2}
    \textup{Rem}_n(\bar\bw) \leq \frac{\beta\|\bar\bw_1\|_{\mathsf F}^2}{2n\rho_1^2} 
    +\frac{\beta\|\bar\bw_2\|_{\mathsf F}^2}{2n\rho_2^2} + 
    \frac{\beta}{2n}\sum_{\ell=1}^2 D_{\ell-1}D_\ell
    \bigg\{C_\ell\left(\frac{\tau_\ell
    }{\rho_\ell}\right)^2-1 -  \log\left(\frac{ 
    \tau^2_\ell}{\rho^2_\ell}\right) \bigg\}
\end{align}
where 
\begin{align}
    C_1 = \frac{2n M_2^2\|\bar\bw_2\|_{\mathsf F}^2 
    \rho_1^2}{\beta} +1,
    \quad
    C_2 = \frac{2n\mu(\mathcal X)M_\sigma^2 \rho_2^2}{\beta} 
    +1.
\end{align}
One can easily check that the minimum of the function
$u\mapsto Cu-1-\log u$ is attained at $u_{\min} = 1/C$ 
and the value at this point is $\log C$.  This implies 
that 
\begin{align}
    \textup{Rem}_n(\bar\bw) & \leq \frac{\beta\|\bar\bw_1\|_{\mathsf F}^2}{2n\rho_1^2} 
    +\frac{\beta\|\bar\bw_2\|_{\mathsf F}^2}{2n\rho_2^2} + 
    \frac{\beta}{2n}\sum_{\ell=1}^2 D_{\ell-1}D_\ell
    \log C_\ell\label{eq:rem2a}\\
    &\stackrel{(1)}{\le} \frac{\beta\|\bar\bw_1\|_{\mathsf F}^2}{2n\rho_1^2} 
    +\frac{\beta\|\bar\bw_2\|_{\mathsf F}^2}{2n\rho_2^2} + 
    \frac{\beta d}{2n} \log \left(\frac{D_0D_1C_1+D_1D_2C_2}{d}\right)\\
    &\le \frac{\beta\|\bar\bw_1\|_{\mathsf F}^2}{2n\rho_1^2} 
    +\frac{\beta\|\bar\bw_2\|_{\mathsf F}^2}{2n\rho_2^2} + 
    \frac{\beta d}{2n} \log \bigg(1 + \frac{2n (D_0D_1  
    M_2^2\|\bar\bw_2\|_{\mathsf F}^2 \rho_1^2 + D_1D_2\mu(\mathcal X)M_\sigma^2\rho_2^2)}{
    \beta d}\bigg),\label{eq:rem2b}
\end{align}
where in (1) we have used the concavity of the function
$u\mapsto \log u$. This completes the proof of the first 
claim of \Cref{prop:tau}.

In the case where \Cref{ass:bounded} is not fulfilled,
but instead $\sigma(0) = 0$, we repeat the same scheme
of proof as above by using \eqref{int_bound2} instead
of \eqref{int_bound1}. This leads to 
\begin{align}
    \textup{Rem}_n(\bar\bw) &\leq   
    \frac{\beta d}{2n}
    \bigg\{C'_1\left(\frac{\tau_1}{\rho_1}\right)^2 + C_2'
    \left(\frac{\tau_2}{\rho_2}\right)^2+ C_3'\left(\frac{\tau_1}{\rho_1}\right)^2\left(
    \frac{\tau_2}{\rho_2}\right)^2 - 2 -  \log\left(\frac{ 
    \tau^2_1\tau^2_2}{\rho^2_1\rho^2_2}\right) \bigg\}\\
    &\qquad+ \frac{\beta\|\bar\bw_1\|_{\mathsf F}^2}{2n\rho_1^2} 
    +\frac{\beta\|\bar\bw_2\|_{\mathsf F}^2}{2n\rho_2^2}.
    \label{eq:rem3}
\end{align}
where 
\begin{align}
    C'_1 = \frac{2n D_0 M_2^2\|\bar\bw_2\|_{\mathsf F}^2 
    \rho_1^2}{\beta (D_0+D_2)} +1,
    \quad
    C'_2 = \frac{2n \bar M_2^2\|\bar\bw_1\|^2_{\mathsf F}
    D_2 \rho_2^2}{\beta(D_0+D_2)D_1} + 1,\quad
    C_3' = \frac{2n  M_2^2 \rho_1^2\rho_2^2D_0D_2}{
    \beta(D_0+D_2)}.
\end{align}
We choose $\tau_1$ and $\tau_2$ so that
\begin{align}
    \bigg(\frac{\tau_1}{\rho_1}\bigg)^2 = \frac{1}{C_1'+C_3'(\tau_2/\rho_2)^2},\qquad
    \bigg(\frac{\tau_2}{\rho_2}\bigg)^2 = 1/C_2'.
\end{align}
With this choice of $\tau_1$ and $\tau_2$ in \eqref{eq:rem3} 
and simple algebra,  we get
\begin{align}
    \textup{Rem}_n(\bar\bw) &\leq   
    \frac{\beta d}{2n}\,\log\big(C_1'C_2'+C_3'\big) + \frac{\beta\|\bar\bw_1\|_{\mathsf F}^2}{2n\rho_1^2} 
    +\frac{\beta\|\bar\bw_2\|_{\mathsf F}^2}{2n\rho_2^2}.
    \label{eq:rem2bis}
\end{align}
To complete the proof, we use the following inequalities
\begin{align}
    \ln(C_1'C_2' + C_3') & \le \log \big(C_1'(C_2' +C_3')\big) \\
    & = \log C_1' + \log (C_2' +C_3')\\
    &\le 2 \ln((C_1'+C_2'+C_3')/2),
\end{align}
where the first inequality follows from the fact that 
$C_2'\ge 1$ whereas the last inequality is a consequence of the 
concavity of the logarithm.

\begin{remark}
The distribution $p$ is centered on the oracle choice 
$\bar\bw$ for the weights of the neural network and 
we observe that the optimized variances $(\tau_1^2, \tau_2^2)$ 
in the proof of \Cref{prop:tau} are of the form $\tau^2_l = 
{\rho^2_\ell}/{(1+ c_\ell n \rho^2_\ell)}$, $\ell = 1, 2,$ 
for some positive constants $c_1,c_2$. These values of 
$\tau_\ell$ arbitrate between the prior beliefs and the 
information brought by data. Indeed, $(1)$ when no training 
data is available the uncertainty around $\bar\bw$ corresponds 
to the prior uncertainty $(\rho^2_1, \rho_2^2)$, $(2)$ when 
the amount of observations $n$ is unlimited and goes to 
infinity the uncertainty around the oracle value converges 
to $0$ and $p$ becomes close to the Dirac mass in $\bar\bw$.
\end{remark}

\subsection{Proof of \Cref{prop:rho}} 
\label{ap:propOptim2}

The main idea is to choose $\rho_1$ and $\rho_2$ minimizing
the upper bound of the worst-case value of the remainder term
\begin{align}
    \sup_{\bar\bw : \|\bar\bw_\ell\|_{\mathsf F}\le B_\ell} 
    \text{Rem}_n(\bar\bw)
\end{align}
furnished by \Cref{prop:tau}. Instead of using the exact 
minimizer, we use a surrogate obtained by simplifying
expressions of $\rho_1$ and $\rho_2$. This is done by the 
following result. 

\begin{corollary}\label{cor:rho}
Let Assumptions \ref{ass:lip}  and \ref{ass:gaussian} 
be satisfied, set $B_\ell = \rho_\ell\sqrt{
2D_{\ell-1} D_\ell}$\/ for $\ell = 1,2$.
\begin{itemize}
    \item[\textup{i)}] If \Cref{ass:bounded} holds true, then 
    \begin{align}
        \sup_{\bar\bw : \|\bar\bw_\ell\|_{\mathsf F}\le B_\ell}\textup{Rem}_n(\bar\bw) \le \frac{\beta d}{n} 
        \log\bigg(3+ \frac{3nB_2^2 (B_1^2M_2^2 
        + \mu(\mathcal X)M_\sigma^2)}{d\beta}\bigg).
    \end{align}
    \item[\textup{ii)}] If the activation function is unbounded
    but vanishes at the origin, then
    \begin{align}
        \sup_{\bar\bw : \|\bar\bw_\ell\|_{\mathsf F}\le B_\ell}\textup{Rem}_n(\bar\bw) \le \frac{\beta d}{n}
        \log \left(3+ \frac{3nB_1^2B_2^2(M_2^2 
        + \bar M_2^2 /D_1)}{d\beta}\right).
    \end{align}
\end{itemize}
\end{corollary}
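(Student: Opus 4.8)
The plan is to take the two upper bounds on $\textup{Rem}_n(\bar\bw)$ furnished by \Cref{prop:tau}, specialize them to the constraint region $\|\bar\bw_\ell\|_{\mathsf F}\le B_\ell$ with the specific choice $B_\ell = \rho_\ell\sqrt{2D_{\ell-1}D_\ell}$, and then bound the worst case over $\bar\bw$ in this region. The key observation driving the choice of $B_\ell$ is that it makes the ratio $\|\bar\bw_\ell\|_{\mathsf F}^2/\rho_\ell^2$ bounded by $2D_{\ell-1}D_\ell$, so that the two ``penalty'' terms $\|\bar\bw_1\|_{\mathsf F}^2/\rho_1^2$ and $\|\bar\bw_2\|_{\mathsf F}^2/\rho_2^2$ in \eqref{eq:ineqTau} together contribute at most $2(D_0D_1+D_1D_2)=2d$. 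This is precisely the amount needed so that, after dividing by the prefactor $\beta/(2n)$, these terms can be absorbed into the logarithm using the elementary inequality $x \le \log(e^x)$ or, more simply, $a + d\log(1+t)\le d\log(e^{a/d}(1+t))$.

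First I would treat case (i). Substituting $\|\bar\bw_\ell\|_{\mathsf F}^2\le 2D_{\ell-1}D_\ell\rho_\ell^2$ into the constants $A_1 = D_0D_1M_2^2\|\bar\bw_2\|_{\mathsf F}^2$ and $A_2 = D_1D_2\mu(\mathcal X)M_\sigma^2$, I would bound $A_1\rho_1^2\le 2D_1^2D_0D_2M_2^2\rho_1^2\rho_2^2$ and rewrite everything in terms of $B_1^2=2D_0D_1\rho_1^2$ and $B_2^2=2D_1D_2\rho_2^2$, so that $A_1\rho_1^2 \le B_1^2B_2^2M_2^2/2$ and $A_2\rho_2^2 = B_2^2\mu(\mathcal X)M_\sigma^2/2$ up to constants. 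The first two penalty terms contribute at most $\frac{\beta}{2n}\cdot 2d = \frac{\beta d}{n}$ after the prefactor is accounted for. I would then merge this additive $\beta d/n$ term with the logarithmic term from \eqref{eq:ineqTau}: writing $\frac{\beta d}{2n}\cdot 2 + \frac{\beta d}{2n}\log(1+\cdots)$ and using $2 = \log e^2$ together with $e^2(1+t)\le 3(1+t)\le 3+3t$ (valid since $e^2<3\cdot e^0$ is false—so I must instead bound via $2+\log(1+t)\le \log(3+3t)$, which holds because $e^2(1+t)\le 3(1+t)$ fails; the correct elementary step is $2+\log(1+t)=\log(e^2(1+t))$ and then absorbing the factor $e^2\approx 7.39$ into the constant, or directly verifying $e^2(1+t)\le 3+3t$ is \emph{false}). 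I therefore expect the genuinely delicate bookkeeping to lie exactly here, in tracking how the additive constant $2$ (equivalently, the factor $\|\bar\bw\|^2/\rho^2$ contributions) combines with the logarithm to produce the clean constant $3$ and the factor $3$ inside, rather than some larger universal constant.

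For case (ii) I would repeat the same substitution using \eqref{eq:ineqTau2}, now with $A_1\rho_1^2$, $A_2'\rho_2^2$ and $A_3'\rho_1^2\rho_2^2$ where $A_2'=\bar M_2^2\|\bar\bw_1\|_{\mathsf F}^2 D_2$ and $A_3'=M_2^2D_0D_2$. Using $\|\bar\bw_1\|_{\mathsf F}^2\le 2D_0D_1\rho_1^2=B_1^2$ and $\|\bar\bw_2\|_{\mathsf F}^2\le B_2^2$, each of the three terms $A_1\rho_1^2$, $A_2'\rho_2^2$, $A_3'\rho_1^2\rho_2^2$ should reduce to a constant multiple of $B_1^2B_2^2(M_2^2+\bar M_2^2/D_1)$; I would verify this by expressing $\rho_1^2\rho_2^2$ in terms of $B_1^2B_2^2/(4D_0D_1^2D_2)$ and checking that the dimensional factors cancel to leave exactly $M_2^2$ and $\bar M_2^2/D_1$. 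The prefactor $2d$ in the logarithm of \eqref{eq:ineqTau2} (versus $d$ in the bounded case) pairs with the penalty contribution $\frac{\beta d}{n}$, and the same absorption trick then yields the stated $\frac{\beta d}{n}\log(3+\cdots)$.

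The main obstacle I anticipate is not conceptual but the precise constant chasing: ensuring that the dimension-laden quantities $A_1,A_2,A_2',A_3'$ collapse to exactly the advertised forms $B_1^2M_2^2+\mu(\mathcal X)M_\sigma^2$ and $B_1^2B_2^2(M_2^2+\bar M_2^2/D_1)$, and that the numerical constants emerging from combining the $\|\bar\bw_\ell\|^2/\rho_\ell^2$ penalties with the logarithm land on the clean value $3$. The whole argument is otherwise a direct specialization of \Cref{prop:tau} plus the monotone, elementary inequality $a+c\log(1+t)\le c\log(3+3t)$ applied with the right $a$ and $c$.
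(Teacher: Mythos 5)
Your overall route is the same as the paper's: substitute $\|\bar\bw_\ell\|_{\mathsf F}^2\le B_\ell^2 = 2D_{\ell-1}D_\ell\rho_\ell^2$ into the bounds of \Cref{prop:tau}, note that the two penalty terms then contribute at most $2d$, rewrite $A_1\rho_1^2$, $A_2\rho_2^2$ (and their primed analogues) in terms of $B_1,B_2$ exactly as you do, and absorb the additive term into the logarithm. Your dimensional bookkeeping in both cases is correct; in particular $A_1\rho_1^2\le \tfrac12 B_1^2B_2^2M_2^2$ and $A_2\rho_2^2=\tfrac12 B_2^2\mu(\mathcal X)M_\sigma^2$, which is where the factor $3$ inside the logarithm comfortably covers the resulting $\tfrac12$.

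The one genuine gap is the final absorption step, which you flag but do not resolve: you correctly observe that $2+\log(1+t)\le\log(3+3t)$ is false (it would need $e^2\le 3$), and your fallback of ``absorbing $e^2$ into the constant'' would not yield the stated bound. What you are missing is a factor of $2$: the target $\frac{\beta d}{n}\log(3+3t)$ equals $\frac{\beta d}{2n}\cdot 2\log(3+3t)$, while \eqref{eq:ineqTau} gives $\frac{\beta d}{2n}\,(2+\log(1+t))$ after your substitution. The inequality actually needed is therefore
\begin{align}
2+\log(1+t)\le 2\log(3+3t)\ \Longleftrightarrow\ e^2(1+t)\le 9(1+t)^2\ \Longleftrightarrow\ e^2\le 9(1+t),
\end{align}
which holds for all $t\ge 0$ since $e^2\approx 7.39<9$. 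Equivalently (this is how the paper phrases it), normalize to $\frac{\beta d}{n}\big\{1+\frac12\log(1+t)\big\}$ and use $1+\frac12\log(1+t)\le\log(3+3t)$, i.e.\ $e\le 3\sqrt{1+t}$. In case (ii) the logarithm in \eqref{eq:ineqTau2} carries the prefactor $2d$, so the same normalization gives $\frac{\beta d}{n}\{1+\log(1+t)\}=\frac{\beta d}{n}\log(e(1+t))\le\frac{\beta d}{n}\log(3+3t)$ since $e<3$. With this step supplied your plan is complete and coincides with the paper's proof; the paper merely adds a motivation for the choice $\rho_\ell^2=B_\ell^2/(2D_{\ell-1}D_\ell)$ via the first-order condition of the optimization over $\rho_\ell$, which is not needed for the verification itself.
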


The rest of this section is devoted to the 
proof of this claim, which implies the claim 
of \Cref{prop:rho}. In view of \eqref{eq:rem2a}, 
we have
\begin{align}
    \textup{Rem}_n(\bar\bw) & \leq \frac{\beta\|\bar\bw_1\|_{\mathsf F}^2}{2n\rho_1^2} 
    +\frac{\beta\|\bar\bw_2\|_{\mathsf F}^2}{2n\rho_2^2} + 
    \frac{\beta}{2n}\sum_{\ell=1}^2 D_{\ell-1}D_\ell
    \log (1 + F_\ell \rho_\ell^2)
\end{align}
with 
\begin{align}
    F_1 = \frac{2n  M_2^2 \|\bar\bw_2\|_{
    \mathsf F}^2}{\beta}\qquad\text{and}\qquad 
    F_2 = \frac{2n \mu(\mathcal X)M_\sigma^2}{\beta}.
\end{align}
Taking the maximum over all $\bar\bw$ such that the 
Frobenius norms of $\bar\bw_1$ and $\bar\bw_2$ are 
bounded by $B_1$ and $B_2$, we get
\begin{align}\label{sup_rem}
    \sup_{\|\bar\bw_1\|_{\mathsf F}\le B_1}
    \sup_{\|\bar\bw_2\|_{\mathsf F}\le B_2}
    \textup{Rem}_n(\bar\bw) & \leq \frac{\beta B_1^2}{2n\rho_1^2} 
    +\frac{\beta B_2^2}{2n\rho_2^2} + \frac{\beta}{2n} 
    \sum_{\ell=1}^2 D_{\ell-1}D_\ell \log (1 + \bar 
    F_\ell \rho_\ell^2)
\end{align}
with 
\begin{align}
    \bar F_1 = \frac{2n  M_2^2 B_2^2}{
    \beta}\qquad\text{and}\qquad 
    \bar F_2 = \frac{2n \mu(\mathcal X)M_\sigma^2}{\beta}.
\end{align}
The first order necessary condition for optimizing 
the right hand side with respect to $\rho_1^2$ and $\rho_2^2$ 
reads as
\begin{align}\label{foc}
    -\frac{B_\ell^2}{\rho_\ell^4} + 
     \frac{D_{\ell-1}D_\ell\bar F_\ell}{1 + \bar F_\ell 
     \rho_\ell^2} = 0
     \quad \Longleftrightarrow \quad
     \rho_\ell^4 - \frac{B_\ell^2}{D_{\ell-1}D_\ell}\, 
    \rho_\ell^2 - \frac{B_\ell^2}{D_{\ell-1}D_\ell \bar F_\ell} 
    = 0.
\end{align}
This second-order equation has only one positive root given by
\begin{align}
    \rho_\ell^2 
    & = \frac{B_\ell^2}{2D_{\ell-1}D_\ell} + 
    \bigg(\frac{B_\ell^4}{4D_{\ell-1}^2D_\ell^2} + 
    \frac{B_\ell^2}{D_{\ell-1}D_\ell \bar F_\ell}
    \bigg)^{1/2}\\
    & = \frac{B_\ell^2}{2D_{\ell-1}D_\ell}\bigg\{1 + 
    \bigg(1 + \frac{4D_{\ell-1}D_\ell}{B_\ell^2 \bar F_\ell}
    \bigg)^{1/2}\bigg\}.
\end{align}
We simplify computations by choosing 
\begin{align}
    \rho_\ell^2 & = \frac{B_\ell^2}{2D_{\ell-1}D_\ell}.
\end{align}
Replacing these values of $\rho_\ell^2$ in \eqref{sup_rem},
we get
\begin{align}\label{sup_rem2}
    \sup_{\|\bar\bw_\ell\|_{\mathsf F}\le B_\ell}
    \textup{Rem}_n(\bar\bw) & \leq 
    \frac{\beta}{n}\sum_{\ell= 1} ^2 
    D_{\ell-1}D_\ell \bigg\{1 
    + \frac12 \log\bigg(1+ 
    \frac{B_\ell^2\bar F_\ell}{2D_{\ell-1}D_\ell}
    \bigg)\bigg\}\\
    &\le \frac{\beta d}{n}\sum_{\ell= 1} ^2 
    \bigg\{1 
    + \frac12 \log\bigg(1+ 
    \frac{B_1^2\bar F_1 + B_2^2 \bar F_2}{2d}
    \bigg)\bigg\},
\end{align}
where the last inequality follows from the concavity
of the logarithm. Replacing $\bar F_1$ and $\bar F_2$
with their respective expressions, we get the inequality
\begin{align}
    \sup_{\|\bar\bw_\ell\|_{\mathsf F}\le B_\ell}
    \textup{Rem}_n(\bar\bw) 
    &\le \frac{\beta d}{n} \Bigg(
    1 + \frac12 \log\bigg(1+ \frac{nB_2^2 
    (B_1^2M_2^2 + \mu(\mathcal X)M_\sigma^2)}{
    d\beta}\bigg)\Bigg)\\
    &\le \frac{\beta d}{n} \log\bigg(3+ \frac{3nB_2^2 
    (B_1^2M_2^2 + \mu(\mathcal X)M_\sigma^2)}{d\beta}
    \bigg),
\end{align}
which coincides with the first claim of the corollary. 

The second claim of the proposition is obtained by replacing
$\rho_\ell$'s by their respective expressions in the second
claim of \Cref{prop:tau}.

\begin{figure}
      \centering
      \includegraphics[width = 0.5\textwidth]{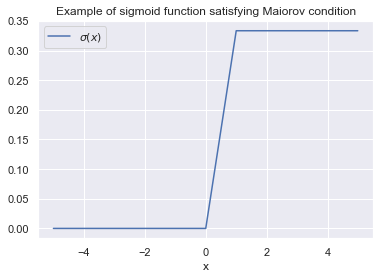}
      \caption{Sigmoid function satisfying the Maiorov condition with $\varphi(x) = {(1-|x|)_+}/{3}$.}
   \label{Fig:MaiorovCond2}
\end{figure}

\subsection{Proof of Proposition \ref{prop:sigmaiorov}}\label{ap:proofmaiorov}
    Since
    \begin{align}\label{eq:AppexSig}
        \sigma(x)=\sum_{j = 1}^\infty \varphi(x-j).
    \end{align}
    we have
    \begin{align}
        \sigma(x+1)-\sigma(x) &= \sum_{j = 1}^\infty 
        \varphi(x+1-j) - \sum_{j = 1}^\infty \varphi(x-j)\\
        &= \sum_{j = 0}^\infty 
        \varphi(x-j) - \sum_{j = 1}^\infty \varphi(x-j)\\
        &= \varphi(x).
    \end{align}
    Now, recall that we use the function $\varphi(x) =
    \frac{1}{\sqrt{2}} e^{-{x^2}/{2}}$. It is clear, 
    that the series 
    \begin{align}
        \sum_{j=1}^\infty |\varphi'(j-x)|
    \end{align}
    converges uniformly on any bounded interval. 
    This implies that $\sigma$ is differentiable and
    \begin{align}
        \sigma'(x) = \sum_{j = 1}^\infty \frac{(j-x)}{\sqrt{2}}e^{-{(x-j)^2}/{2}}.
    \end{align}
    Let us denote by $[x]$ the integer part of $x$ 
    and by $\{x\}= x - [x]$ the fractional part of $x$. 
    Recall also that the function $u\mapsto e^{-u^2/2}$
    is increasing on $(-\infty,0]$ and decreasing on
    $[0,+\infty)$. Therefore, we have
    \begin{align}
        \sigma(x) &= \frac{1}{\sqrt{2}}
        \sum_{j = 1}^{[x]}e^{-{(x-j)^2}/{2}} + 
        \frac{1}{\sqrt{2}}\sum_{j = [x]+1}^\infty e^{-{(x-j)^2}/{2}}\\
        &\le \frac{1}{\sqrt{2}}\sum_{j = 1}^{[x]-1} 
        \int_{x-j-1}^{x-j} e^{-u^2/{2}}\,du + 
        \frac{e^{-\{x\}^2/2} + e^{-(1-\{x\})^2/2}}{\sqrt{2}} 
        +\frac{1}{\sqrt{2}}\sum_{j = [x]+2}^\infty 
        \int_{j-x-1}^{j-x} e^{-u^2/{2}}\,du\\
        &\le \frac{1}{\sqrt{2}} 
        \int_{\{x\}}^{x-1} e^{-u^2/{2}}\,du + 
        \frac{e^{-\{x\}^2/2} + e^{-(1-\{x\})^2/2}}{\sqrt{2}} 
        +\frac{1}{\sqrt{2}} 
        \int_{-\infty}^{\{x\}-1} e^{-u^2/{2}}\,du\\
        &\le \frac{1}{\sqrt{2}} 
        \int_{-\infty}^{+\infty} e^{-u^2/{2}}\,du + 
        \frac1{\sqrt{2}} = \sqrt{\pi} + \frac1{\sqrt{2}}.
    \end{align}
    For $x>0$, using similar arguments and the fact that 
    the function
    $u\mapsto ue^{-u^2/2}$ is 
    decreasing on $[1,\infty)$, we get
    \begin{align}
        \sqrt{2}\sigma'(x) &= 
        -\sum_{j = 1}^{[x]} (x-j)e^{-{(x-j)^2}/{2}} + 
        \sum_{j = [x]+1}^\infty (j-x)e^{-{(j-x)^2}/{2}}\\
        &\le (1-\{x\}) e^{-(1-\{x\})^2/2} 
        +\sum_{j = [x]+2}^\infty 
        \int_{j-x-1}^{j-x} ue^{-u^2/{2}}\,du\\
        &\le (1-\{x\}) e^{-(1-\{x\})^2/2}  
        + \int_{1-\{x\}}^{\infty} ue^{-u^2/{2}}\,du\\
        & = (2-\{x\}) e^{-(1-\{x\})^2/2} \le \sqrt{2}.
    \end{align}
    In the same way, one can check that $\sqrt{2}\sigma'(x)\ge 
    -\sqrt{2}$ for every $x>0$. Therefore, $|\sigma'(x)|\le 1$
    for every positive $x$. On the other hand, for $x\le 0$, 
    we have $\sigma'(x)\ge 0$ and 
    \begin{align}
        \sigma'(x) &\le \sigma'(0) = \frac1{\sqrt{2}}
        \sum_{j=1}^\infty j e^{-j^2/2}\\
        &\le \frac1{\sqrt{2}}\bigg(e^{-1/2} + 
        \sum_{j=2}^\infty \int_{j-1}^{j} u e^{-u^2/2}\,du\bigg)\\
        & = \frac1{\sqrt{2}}\big(e^{-1/2} + e^{-1/2}\big) \le 1.
    \end{align}
    This completes the proof of the fact that $\sigma$ 
    is 1-Lipschitz.

\subsection{Proof of Proposition \ref{prop:ReLUSobolev}}\label{ap:ReLUsobolev}
\begin{proof}
Let assume  $r\in (\frac{D_0}{2}, 2D_0+2)$ and  $\bar r \in [D_0/2 , r)$. Then, $W^{r,2}(\mathcal{X}) \subset \mathscr{B}^{\bar r - D_0/2}(\mathcal{X})$  \cite[Lemma 2.5]{xu2020finite}, and since $\frac{2K}{2K+1}=\frac{2s+ D_0}{2s +2D_0+1}$, substituting  $s$ by $\bar r -D_0/2$, we obtain:

$$\frac{2K}{2K+1}=\frac{2\bar r}{2\bar r + D_0+1}. $$
Substituting the terms in \Cref{prop:ReLUBarron}, this yields the result with 
\begin{align}
    \bar g_{\bar r}(n) &= 2C_{\mathsf{PB}}  C^2+ 4C_{\mathsf{PB}}\log\left(3+
    \frac{3nB_1^2B_2^2(M_2^2 + \bar M_2^2)}{d\beta}\right).   
\end{align}
\end{proof}

\end{appendix}

\end{document}